\theoremstyle{definition}
\newtheorem{definition}{Definition}
\newtheorem{example}[definition]{Example}
\newtheorem{remark}[definition]{Remark}
\theoremstyle{plain}
\newtheorem{lemma}[definition]{Lemma}
\newtheorem{proposition}[definition]{Proposition}
\newtheorem{theorem}[definition]{Theorem}
\newtheorem{corollary}[definition]{Corollary}
\newcommand\A{{\mathbf A}}
\newcommand\R{{\mathbf R}}
\newcommand\vx{\vec{x}}
\newcommand\Int{{\mathrm {Int}}}
\newcommand{\M}{M(x_{0},\dots,x_{n})}
\newcommand{\Ge}{\langle x_{i}, x_{j}, x_{0}\rangle}
\title[]{Embeddings of metric Boolean algebras in $\mathbb{R}^{N}$}
\author{Stefano Bonzio}
\address{Stefano Bonzio, Department of Mathematics and Computer Science \\
         University of Cagliari, Italy.}
\email{stefano.bonzio@unica.it}
\author{Andrea Loi}
\address{Andrea Loi, Department of Mathematics and Computer Science \\
         University of Cagliari, Italy.}
         \email{loi@unica.it}
\date{}
\keywords{Metric Boolean algebra, positive probability measure, discrete metric space, isometric embedding.}
\subjclass[2020]{Primary: 06E99 Secondary: 54E35, 30L05.}
\begin{document}

\maketitle

\begin{abstract}
A Boolean algebra $\A$ equipped with a (finitely-additive) positive probability measure $m$ can be turned into a metric space $(\A , d_{m})$, where $d_{m}(a,b)= m ((a\wedge\neg b)\vee(\neg a\wedge b))$, for any $a,b\in A$, sometimes referred to as \emph{metric Boolean algebra}. In this paper, we study under which conditions the space of atoms of a finite metric Boolean algebra can be isometrically embedded in $\mathbb{R}^{N}$ (for a certain $N$) equipped with the Euclidean metric. In particular, we characterize the topology of the positive measures over a finite algebra $\A$ such that the metric space $(\mathsf{At}(\A), d_m)$ embeds isometrically in $\mathbb{R}^{N}$.   
\end{abstract}

\section{Introduction}

Every Boolean algebra $\A$ equipped with a positive probability measure $m$ is a metric space, where the Kolmogorov distance between two elements $a,b\in A$ is given by the value (in $[0,1]$) assigned by $m$ to the symmetric difference between $a$ and $b$, namely the element $(a\wedge\neg b)\vee (\neg a\wedge b)$. Algebras ``metrized'' by positive measures are called \emph{metric Boolean algebras}, a nomenclature introduced by Kolmogorov \cite{Kolmogorov}, or \emph{normed Boolean algebras} \cite{BooleanAlgebrainAnalysis}. 

The algebraic and topological interest around metric (or normed) Boolean algebras mainly arises from probability theory and its subfield of stochastic geometry \cite{normedBA}. Moreover, these structures have recently found interesting applications in the theory of random sets (see e.g. \cite{RandomSets}), which can be represented as a random element taking values in a normed Boolean algebra \cite{gotovac2017new}. Applications of the theory of metric Boolean algebras mostly rely on the idea that they are structures well suited to treat the probability of (two-valued) events (see e.g. \cite{BooleanAlgebrainAnalysis}). However, the observation that positive probability measures turn an algebra into a metric space adds an importance going far beyond logic and probability. Indeed, in the last few years, the interest around the geometry of discrete metric spaces has relevantly increased. Due to innovative ideas of Gromov \cite{Gromovbook} and others \cite{BACAK,Burago, JOHARINAD}, the study of traditionally relevant geometrical properties usually conceived for Riemannian manifolds, such as length of curves, measures, sectional and Ricci curvatures, has been fruitfully extended to discrete spaces. In this context, the present work focuses on expanding the geometry of metric Boolean algebras and, in particular, to understand for which choices of measures $m$ these metric spaces resemble Euclidean spaces, in the sense that the subspace of atoms of a metric Boolean algebras can be isometrically embedded in the prototypical example of metric space, $\mathbb{R}^{N}$ (for some $N$) equipped with the Euclidean metric. The choice of the space of the atoms (forcing to confine our attention to atomic algebras only) is driven, on the one side, by its relevance in probability theory, as, in an atomic $\sigma$-algebra of events, the probability of any event depends on the probability distribution on the set of atoms and, on the other, by the impossibility of getting an isometric embedding of the entire algebra (see Remark \ref{rem: no immersione in R_n}).

The paper is structured as follows: in Section \ref{sec: preliminari} are introduced all the preliminary notions necessary to go through the reading of the whole paper. Section \ref{sec: immersioni} discusses the details of isometric embeddings of generic metric spaces into Euclidean spaces (ruled by Morgan's theorem) and shows the existence of a probability measure allowing the embedding of the atoms of a metric Boolean algebra in $\mathbb{R}^{N}$, for an appropriate $N$ (Corollary \ref{cor: immersione atomi con stessa probabilita}). Section \ref{sez: principale} contains the main findings of the paper, namely the study of the topology of the space of the measures  for which the metric space of the atoms of a finite Boolean algebra can be isometrically embedded in $\mathbb{R}^{N}$, for some $N$: the main finding is that this space is contractible, while its complement is simply connected but not contractible. Finally, Section \ref{sec: Appendice} contains the proof of the very useful technical Lemma \ref{lemma: determinante}, which is applied throughout the whole paper, for establishing the existence/non-existence of isometric embeddings in $\mathbb{R}^{N}$.

\section{Preliminaries: metric Boolean algebras}\label{sec: preliminari}

Let $\A$ be a Boolean algebra equipped with a \emph{strictly positive} (finitely additive) probability measure, i.e. a map $m\colon\A\to [0,1]$ such that: 
\begin{enumerate}
\item $m(\top) = 1$, 
\item $m(a\vee b) = m(a) + m(b)$, for every $a,b\in A$ such that $a\wedge b = \perp$, 
\item $m(a) > \perp$, for every $a\in A$, $a\neq \perp$. 
\end{enumerate}
We indicate with $\top$ and $\perp$ the top and bottom element, respectively, of a Boolean algebra (to avoid confusion with the numbers $0,1$).
\vspace{10pt}

The following recalls the well-known properties of probability measures. 
\begin{proposition}\label{prop: proprieta m}
Let $m$ be a finitely additive probability measure over a Boolean algebra $\A$, then the following hold, for every $a,b\in A$: 
\begin{enumerate}
\item $m$ is monotone, i.e. if $a\leq b$ then $m(a)\leq m(b)$;
\item $m(a) + m(b) = m(a\vee b) + m(a\wedge b)$; 
\item $m(a\vee b)\leq m(a) + m(b)$;
\item $m(a') = 1-m(a)$;
\item if $m$ is strictly positive, $a < b$ implies $m(a) < m(b)$; 
\item If $\A$ is finite then $\displaystyle\sum_{i={1}}^{k} m(a_{i}) = 1$, where $\{a_{1},\dots,a_{k}\}$ is the set of atoms of $\A$.
\end{enumerate}
\end{proposition}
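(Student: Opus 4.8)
The plan is to derive all six items from the two defining properties — normalization $m(\top)=1$ and finite additivity on disjoint elements — together with the fact that $m$ takes values in $[0,1]$, hence is nonnegative. Before anything else I would record the convenient identity $m(\perp)=0$: since $\top=\top\vee\perp$ with $\top\wedge\perp=\perp$, additivity gives $m(\top)=m(\top)+m(\perp)$, whence $m(\perp)=0$.

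The single computation that drives most of the proof is the disjoint decomposition
\[
b=(a\wedge b)\vee(b\wedge\neg a),\qquad (a\wedge b)\wedge(b\wedge\neg a)=\perp,
\]
valid for arbitrary $a,b$, to which additivity gives $m(b)=m(a\wedge b)+m(b\wedge\neg a)$. Specializing to $a\leq b$ (so that $a\wedge b=a$) yields $m(b)=m(a)+m(b\wedge\neg a)$, and nonnegativity of the second summand proves monotonicity (item 1). Item 5 is the same line sharpened: if $a<b$ then $b\wedge\neg a\neq\perp$, so strict positivity forces $m(b\wedge\neg a)>0$ and hence $m(a)<m(b)$. Item 4 drops out of additivity applied to $a\vee\neg a=\top$ and $a\wedge\neg a=\perp$, giving $1=m(a)+m(\neg a)$.

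For the inclusion--exclusion identity (item 2) I would decompose $a\vee b=a\vee(b\wedge\neg a)$ into disjoint pieces, obtaining $m(a\vee b)=m(a)+m(b\wedge\neg a)$, and then substitute $m(b\wedge\neg a)=m(b)-m(a\wedge b)$ from the displayed identity above; rearranging gives $m(a)+m(b)=m(a\vee b)+m(a\wedge b)$. Subadditivity (item 3) is then immediate because $m(a\wedge b)\geq 0$. Finally, for item 6 I would use that in a finite Boolean algebra the top element is the join of all its atoms and that distinct atoms are disjoint; a short induction via finite additivity then gives $\sum_{i=1}^{k}m(a_{i})=m(\top)=1$.

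None of the steps is a genuine obstacle: the content is the familiar measure-theoretic calculus transported verbatim to Boolean algebras. The only points needing care are bookkeeping ones: keeping straight which inequalities rest merely on nonnegativity of $m$ (items 1 and 3) versus which require strict positivity (item 5), and, in item 6, invoking atomicity of finite Boolean algebras to write $\top$ as the join of its atoms — the one place where finiteness is actually used.
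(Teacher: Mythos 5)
Your proof is correct: all six items follow as you argue, and the one subtle point in item 6 (that additivity, stated only for pairs, extends to the pairwise-disjoint atoms via distributivity and induction) is handled properly. Note that the paper itself gives no proof of this proposition---it is stated as a recollection of well-known facts---and your argument is precisely the standard one it implicitly relies on, built on the disjoint decomposition $b=(a\wedge b)\vee(b\wedge\neg a)$.
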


\begin{remark}\label{rem: spazio metrico}
Let $\A$ be a Boolean algebra equipped with a strictly positive (finitely additive) probability measure $m$. Then ($\A, d_{m}$) is a metric space, where 
\[
d_{m}(a,b)\coloneqq m(a\vartriangle b) = m((a\wedge b')\vee(a'\wedge b)),
\] 
for every $a,b\in A$.\\
\noindent
1) $d_{m}(a,b) = d_{m}(b,a)$ is immediate by commutativity of $\wedge$ and $\vee$. \\
\noindent
2) for triangular inequality, first recall that, in any Boolean algebra $\A$, it holds $a\wedge c'\leq (a\wedge b')\vee (b\wedge c')$, for any $a,b,c\in A$. We have to show that $m(a\vartriangle c)\leq m(a\vartriangle b) + m(b\vartriangle c)$. Observe that $a\vartriangle c = (a\wedge c')\vee (a'\wedge c) \leq (a\wedge b')\vee (b\wedge c')\vee (a'\wedge b)\vee(b'\wedge c) = ((a\wedge b')\vee(a'\wedge b))\vee((b\wedge c')\vee(b'\wedge c)) = (a\vartriangle b)\vee(b\vartriangle c)$. It then follows, from Proposition \ref{prop: proprieta m}, that $m(a\vartriangle c)\leq m((a\vartriangle b)\vee(b\vartriangle c))\leq m(a\vartriangle b) + m(b\vartriangle c)$.\\
\noindent
3) $d_{m}(a,b) = 0$ if and only if $a=b$. Right to left is obvious. For the other direction, suppose $m(a\vartriangle b)= 0$, then, since $m$ is strictly positive, $(a\wedge b')\vee(a'\wedge b) = \perp$, i.e. $a\wedge b' = \perp$ and $b\wedge a'= \perp$, and this means that $b'= a'$, thus $a = b$. 
\qed
\end{remark}

From now on, we always intend a Boolean algebra $\A$ equipped with a strictly positive (finitely additive) probability measure $m$ and refers to it as \emph{metric} Boolean algebra, namely as the metric space $(\A,d_{m})$ (we will not make explicit reference to the metric $d_{m}$). Observe that the assumption that $m$ is strictly positive is crucial to have a metric space: in case $m$ is \emph{not} strictly positive then $(\A, d_{m})$ is a pseudo-metric space.

In general, not every Boolean algebra can be equipped with a strictly positive (finitely additive) probability measure: a characterization of those Boolean algebras for which such measures exist is due to Kelley \cite[Theorem 4]{Kelley59} (see also \cite{Jech}). For the purpose of the present paper, we observe that every atomic Boolean algebra whose sets of atoms is numerable admits a strictly positive measure \cite[Theorem 2.5]{HornTarski}.

The basic properties of the term-operation $\vartriangle$ are recalled in the following. 
\begin{proposition}\label{prop: proprieta triangolo}
For any $a,b\in A$, the following hold: 
\begin{enumerate}
\item $a\vartriangle \perp = a$, 
\item $a\vartriangle \top = a'$,
\item $a'\vartriangle b' = a\vartriangle b$, 
\item $a\vartriangle a' = \top$. 
\end{enumerate}
\end{proposition}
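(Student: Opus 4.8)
The plan is to treat all four items as direct computations from the definition of the symmetric difference, $a\vartriangle b = (a\wedge b')\vee(a'\wedge b)$, unfolding it and simplifying with the elementary Boolean laws that are available in any Boolean algebra: the complementation laws $a\wedge a' = \perp$ and $a\vee a' = \top$, the identity and annihilation laws $a\vee\perp = a$, $a\wedge\top = a$, $a\wedge\perp=\perp$, $a\vee\top=\top$, the involution law $a'' = a$, idempotence, and commutativity of $\wedge$ and $\vee$. Nothing here requires the measure $m$ or anything metric; these are purely equational identities, so I would prove each one by a short chain of equalities.

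For item (1) I would substitute $b = \perp$: this gives $a\vartriangle\perp = (a\wedge\perp')\vee(a'\wedge\perp) = (a\wedge\top)\vee\perp = a$. Item (2) is the dual substitution $b = \top$, yielding $a\vartriangle\top = (a\wedge\top')\vee(a'\wedge\top) = \perp\vee a' = a'$. For item (3) I would expand $a'\vartriangle b' = (a'\wedge b'')\vee(a''\wedge b')$ and apply the involution law $b'' = b$, $a'' = a$ to obtain $(a'\wedge b)\vee(a\wedge b')$, which equals $(a\wedge b')\vee(a'\wedge b) = a\vartriangle b$ after commuting the two joinands. Finally, item (4) comes from taking $b = a'$ in the definition: $a\vartriangle a' = (a\wedge a'')\vee(a'\wedge a') = (a\wedge a)\vee(a'\wedge a') = a\vee a' = \top$, using involution and idempotence.

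I do not expect any genuine obstacle: the only points where a careless step could creep in are the applications of the involution law $a''=a$ in items (3) and (4), where the definition forces a double complement, so I would make sure to simplify those explicitly rather than silently. Each identity thus reduces to a single line, and the proof is complete once all four chains of equalities have been written out.
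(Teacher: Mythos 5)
Your proof is correct: all four identities follow, as you show, by direct substitution into $a\vartriangle b=(a\wedge b')\vee(a'\wedge b)$ and the basic Boolean laws, and each chain of equalities checks out. The paper in fact states this proposition without proof, treating it as a recollection of well-known facts, so your routine equational verification is exactly the argument the authors leave implicit.
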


We now prove an easy fact about atoms of (atomic) Boolean algebras that will be used in the next section. We indicate by $\mathsf{At}(\A)$ the set of atoms of an atomic Boolean algebra $\A$.

\begin{lemma}\label{lemma: distanza tra gli atomi}
Let $\A$ be an atomic metric Boolean algebra and let $a,b\in\mathsf{At}(\A)$ (with $a\neq b$). Then $d_{m}(a,b) = m(a) + m(b)$.
\end{lemma}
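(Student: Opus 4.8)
The plan is to reduce the symmetric difference $a \vartriangle b$ to the ordinary join $a \vee b$ by exploiting the disjointness of distinct atoms, and then to apply finite additivity of the measure $m$.

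First I would establish that distinct atoms are disjoint, i.e. $a \wedge b = \perp$. Indeed, $a \wedge b \leq a$, and since $a$ is an atom the only elements below it are $\perp$ and $a$ itself; thus either $a \wedge b = \perp$ or $a \wedge b = a$. In the latter case $a \leq b$, and as $b$ is also an atom this forces $a = b$, contradicting the hypothesis $a \neq b$. Hence $a \wedge b = \perp$. This is the conceptual heart of the argument, and it is where the atomicity assumption is genuinely used; everything afterwards is routine Boolean manipulation.

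From $a \wedge b = \perp$ I would next deduce $a \leq b'$ and, symmetrically, $b \leq a'$, which give $a \wedge b' = a$ and $a' \wedge b = b$. Substituting into the definition of the symmetric difference recalled in Remark \ref{rem: spazio metrico}, one obtains
\[
a \vartriangle b = (a \wedge b') \vee (a' \wedge b) = a \vee b.
\]
Finally, since $a$ and $b$ meet to $\perp$, condition (2) in the definition of a (finitely additive) probability measure applies to $a \vee b$, yielding $m(a \vee b) = m(a) + m(b)$. Combining the two displays gives $d_{m}(a,b) = m(a \vartriangle b) = m(a \vee b) = m(a) + m(b)$, as claimed.

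I do not expect any serious obstacle: the statement is essentially a one-line computation once disjointness of distinct atoms is noted. The only step requiring a moment of care is that first reduction, and even there the argument is immediate from the minimality property defining atoms.
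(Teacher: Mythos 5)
Your proof is correct and takes essentially the same route as the paper's: both rest on the disjointness $a\wedge b=\perp$ of distinct atoms (which the paper uses without spelling out the atom argument), the consequent inequalities $a\leq b'$ and $b\leq a'$, and finite additivity of $m$. The only cosmetic difference is order of operations --- the paper applies additivity to $(a\wedge b')\vee(a'\wedge b)$ and then simplifies each summand, while you first simplify the symmetric difference to $a\vee b$ and then apply additivity --- so the two arguments are identical in substance.
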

\begin{proof}
Let $a,b\in\mathsf{At}(\A)$, with $a\neq b$. 
Since $a\wedge b = \perp$, then $a\leq b'$ and $b\leq a'$; thus $d_{m}(a,b) = m((a\land b')\vee(a'\land b)) = m(a\land b') + m(a'\land b) = m(a) + m(b)$.
\end{proof}

\section{Isometric embeddings in $\mathbb{R}^{N}$}\label{sec: immersioni}

It is natural to wonder whether the metric space $(\A,d_{m})$ embeds isometrically into $\mathbb{R}^{N}$ (for some $N$) with the Euclidean metric. Unfortunately this is never the case. 

\begin{remark}\label{rem: no immersione in R_n}
A Boolean algebra $\A$ (with $|A| > 2$) can \emph{not} be isometrically embedded in $\mathbb{R}^{N}$, for any $N\in\mathbb{N}$, equipped with the Euclidean distance. Indeed, let $a\in A$ be any element different from the constants. Then $d_{m}(a,0) = m (a) $, $d_{m}(a,1) = m(a') = 1 - m(a)$ and $d_{m}(0,1) = 1$, which implies that any isometric embedding maps $a,0,1$ on the same line, where (for analogous reasoning) should lie also $a'$. Since $d_{m}(a,a') = 1$, the only possibility is setting $\iota\colon\A\to\mathbb{R}^{N}$, $\iota(a) = \iota(0)$ and $\iota(a')=\iota(1)$ (or, viceversa), but such an embedding can not be isometric, as $|\iota(a)-\iota(0)| = 0$ while $d_{m}(a,0)\neq 0$.
\qed
\end{remark}

We now turn our attention to a relevant subspace of a metric atomic Boolean algebra, namely the space of $\mathsf{At}(\A)$ of its atoms (with the metric $d_{m}$). \\
\vspace{5pt}

\textbf{Question:} Is there a (finitely additive) strictly positive probability measure $m$ such that the space $(\mathsf{At}(\A), d_m )$ can be isometrically embedded in $\mathbb{R}^{N}$, for some $N$? \\

\noindent
Embeddings of generic metric spaces into $\mathbb{R}^{N}$ are ruled by a theorem of Morgan \cite{Morgan}. We recall some notions relevant to introduce it. 

\begin{definition}\label{def: flat}
A metric space $(X,d)$ is flat if the determinant of the $n\times n$ matrix $\M$, whose generic entry is $ \langle x_{i}, x_{j}, x_0\rangle= \frac{1}{2}(d(x_{0},x_{i})^{2} + d(x_{0},x_{j})^{2} - d(x_{i},x_{j})^{2}) $, is non-negative for every $n$\emph{-simplex}, namely every choice of $n+1$ points $\{x_0 , \dots , x_{n}\}$ in $X$.
\end{definition}

\begin{definition}\label{def: dimensione}
The dimension of a space $(X,d)$ is the greatest $N $ (if exists) such that there exists a $N$-simplex with positive determinant.
\end{definition}
\begin{theorem}[Morgan \cite{Morgan}]\label{th: teorema di Morgan}
A metric space $(X,d)$ embeds in $\mathbb{R}^{N}$ if and only if it is flat and has dimension less or equal to $N$.
\end{theorem}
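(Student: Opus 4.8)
The plan is to recognize the matrix $M(x_{0},\dots,x_{n})$ as a \emph{candidate Gram matrix} and to reduce the whole statement to the classical fact that a symmetric matrix is the Gram matrix of vectors in $\mathbb{R}^{r}$ precisely when it is positive semidefinite of rank at most $r$. The starting observation is purely algebraic: if $v_{i}=y_{i}-y_{0}$ are vectors in a Euclidean space, then
\[
\langle v_{i},v_{j}\rangle = \tfrac{1}{2}\bigl(|v_{i}|^{2}+|v_{j}|^{2}-|v_{i}-v_{j}|^{2}\bigr),
\]
so the entry $\langle x_{i},x_{j},x_{0}\rangle$ is exactly the inner product that $\iota(x_{i})-\iota(x_{0})$ and $\iota(x_{j})-\iota(x_{0})$ must have under any isometric embedding $\iota$ sending $x_{0}$ to the origin. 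Thus $M(x_{0},\dots,x_{n})$ is forced to be the Gram matrix of the image of the simplex, and everything follows from controlling its positivity and its rank.

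For the forward implication, suppose $\iota\colon X\to\mathbb{R}^{N}$ is an isometric embedding; after a translation we may assume $\iota(x_{0})=0$. For any simplex $\{x_{0},\dots,x_{n}\}$ the matrix $M(x_{0},\dots,x_{n})$ equals the Gram matrix of the vectors $\iota(x_{1}),\dots,\iota(x_{n})$, hence is positive semidefinite; in particular its determinant is nonnegative, which is flatness. Moreover these vectors lie in $\mathbb{R}^{N}$, so any $N+1$ of them are linearly dependent and every $(N+1)\times(N+1)$ Gram matrix is singular: no $(N+1)$-simplex has positive determinant, so the dimension is at most $N$.

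For the converse, assume $(X,d)$ is flat of dimension at most $N$. Fix a base point $x_{0}$ and, in the finite case, consider the full Gram matrix $G=M(x_{0},\dots,x_{n})$. Its principal minors are exactly the determinants attached to the sub-simplices $\{x_{0},x_{i_{1}},\dots,x_{i_{k}}\}$, so flatness makes all of them nonnegative; by the standard characterization of positive semidefiniteness through principal minors, $G$ is positive semidefinite. Writing $G=V^{\top}V$ with $V$ of size $(\operatorname{rank}G)\times n$ produces vectors $v_{1},\dots,v_{n}$ with $\langle v_{i},v_{j}\rangle=\langle x_{i},x_{j},x_{0}\rangle$, and setting $\iota(x_{0})=0$, $\iota(x_{i})=v_{i}$ yields an isometry onto its image. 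The dimension hypothesis guarantees $\operatorname{rank}G\le N$, so the embedding lands in $\mathbb{R}^{N}$.

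The main obstacle is the passage to infinite $X$, where one cannot simply diagonalize a single finite matrix. Here I would fix a maximal affinely independent simplex $\{x_{0},\dots,x_{r}\}$ (with $r\le N$ by the dimension bound), embed it by the finite argument, and then place every remaining point using its data $\langle x_{i},x_{j},x_{0}\rangle$ relative to this simplex; flatness of the enlarged simplices is precisely what guarantees that these coordinates are real and mutually consistent, so the resulting map is a well-defined global isometric embedding into $\mathbb{R}^{r}\subseteq\mathbb{R}^{N}$. Verifying this coherence — that the local realizations agree up to rigid motion and assemble into a single map — is the delicate point, the rest being linear algebra.
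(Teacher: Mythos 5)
The paper does not actually prove this theorem: it is quoted from Morgan's 1974 article, and the only internal trace is the remark that follows it, recording the constructive embedding $x\mapsto(\langle x,x_1,x_0\rangle,\dots,\langle x,x_N,x_0\rangle)$. So your proposal can only be judged on its own merits, and on those merits the core is sound: it is the classical Gram-matrix (Schoenberg-type) characterization. Your forward direction is correct as stated. In the finite converse you silently use two true facts worth making explicit: (i) a symmetric matrix is positive semidefinite iff \emph{all} of its principal minors (not merely the leading ones) are nonnegative --- your appeal to this is legitimate precisely because the principal submatrices of $M(x_0,\dots,x_n)$ are exactly the matrices $M(x_0,x_{i_1},\dots,x_{i_k})$ of the sub-simplices through the base point $x_0$, and flatness covers all of these; and (ii) the rank of a positive semidefinite matrix equals the largest order of a nonsingular principal submatrix, which is what converts ``dimension $\leq N$'' into $\operatorname{rank}G\leq N$ (a nonsingular PSD principal block is positive definite, hence would give an $(N+1)$-simplex of positive determinant).

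The one genuine gap is the coherence step in the infinite case, which you flag but do not close; it closes cleanly if you choose the base simplex by \emph{determinant rank} rather than ``affine independence'' (which is not directly meaningful in an abstract metric space). Let $r$ be the dimension and fix a simplex $\{x_0,\dots,x_r\}$ with $\det M(x_0,\dots,x_r)>0$. For any $x\in X$, flatness gives $\det M(x_0,\dots,x_r,x)\geq 0$ while the definition of dimension forbids $>0$, so this determinant vanishes; since the matrix is PSD with positive definite leading $r\times r$ block $G_r$, its Gram vectors satisfy $w_x\in\operatorname{span}(w_1,\dots,w_r)$, so the position $v_x:=VG_r^{-1}g_x$, with $g_x=(\langle x,x_1,x_0\rangle,\dots,\langle x,x_r,x_0\rangle)^{t}$, is the \emph{unique} admissible one. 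For two points $x,y$ adjoin both: $M(x_0,\dots,x_r,x,y)$ is again PSD (all its principal minors are simplex determinants through $x_0$) with the same positive definite block and both bordered $(r+1)$-minors zero, which forces $w_x,w_y$ into $\operatorname{span}(w_1,\dots,w_r)$ and hence
\begin{equation*}
\langle v_x,v_y\rangle=\langle w_x,w_y\rangle=\tfrac{1}{2}\bigl(d(x_0,x)^2+d(x_0,y)^2-d(x,y)^2\bigr),
\end{equation*}
so $|v_x-v_y|=d(x,y)$. Because every point is placed relative to the \emph{single} fixed simplex, consistency is automatic and no ``agreement up to rigid motion'' bookkeeping is needed; this is exactly what the paper's constructive remark encodes, since $g_x$ is the map $f$ displayed there and the fixed linear isomorphism $VG_r^{-1}$ turns it into the isometry. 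With these two additions (the PSD rank fact and the vanishing-bordered-determinant argument) your proof is complete.
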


Morgan's theorem is constructive. Indeed, given $(X,d)$ flat and with dimension $N$, then the embedding into $\mathbb{R}^{N}$ is given by:
$$ f\colon X\to \mathbb{R}^{N} $$
$$ x\mapsto (\langle x, x_{1}, x_0\rangle,\dots, \langle x, x_{N}, x_0\rangle), $$

for a generic $n$-simplex $\{x_{0}, x_{1}, \dots, x_{N}\} $.\\

\noindent
From now on, our analysis will be confined to finte metric Boolean algebras.

\begin{remark}\label{rem: nostra matrice}
In order to simplify notation, given a (finite) metric Boolean algebra $\A$ with $k+1$ atoms $\mathsf{At}(\A) = \{a_0, a_{1},\dots, a_{k}\}$, we set $x_\alpha = m(a_{\alpha})$ (thus $x_{\alpha} > 0$, for every $ \alpha\in\{0,1,\dots, k\}$).
The matrix $\M$, $2\leq n\leq k$, introduced in Definition \ref{def: flat}, associated to $\A$ has generic entry
\begin{equation}\label{eq: generica entrata matrice}
\Ge = (x_0 + x_i)^{2}\delta_{ij} + (x_{0}^{2} + x_{0}x_{i} + x_{0}x_{j} - x_{i}x_{j})(1-\delta_{ij}).
\end{equation}
\noindent
Indeed, $\Ge = \frac{1}{2}(d_{m}(a_{0},a_{i})^{2} + d_{m}(a_{0},a_{j})^{2} - d_{m}(a_{i},a_{j})^{2}) $, and, for $i = j$, $d_{m}(a_i , a_j ) = 0$, hence, by Lemma \ref{lemma: distanza tra gli atomi}, $\Ge = \frac{1}{2}(2(x_{0} + x_{i})^{2}) = (x_{0} + x_{i})^{2}$. Else, for $i\neq j$, $\Ge = \frac{1}{2}((x_{0} + x_{i})^{2} + (x_{0} +x_{j})^{2} - (x_{i} +x_{j})^{2}) = \frac{1}{2}(x_{0}^{2} + x_{i}^{2} + 2x_{0}x_{i} + x_{0}^{2} + x_{j}^{2} + 2x_{0}x_{j} - x_{i}^{2} - x_{j}^{2} - 2x_{i}x_{j}) =  x_{0}^{2} + x_{0}x_{i} + x_{0}x_{j} - x_{i}x_{j}$.
\qed
\end{remark}

\begin{lemma}\label{lemma: determinante}
Let $\A$ be a finite metric atomic Boolean algebra with $k+1$ atoms and $\M$, $2\leq n\leq k$ be the matrix defined above. Then 
\begin{equation}\label{fondlemma}
\det (\M)=2^{n-1}\left[\left(\sum_{\alpha=0}^n x_0\cdot\cdots \cdot\hat x_\alpha\cdot \cdots \cdot x_n\right)^2-(n-1)\left(\sum_{\alpha=0}^n x_0^2\cdot\cdots\cdot\hat x_\alpha^2\cdots\cdot x_n^2\right)\right],
\end{equation}
where $\hat x_i$ means that $x_i$ has to be omitted.
\end{lemma}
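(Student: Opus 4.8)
The plan is to exhibit $\M$ as a rank-two perturbation of a diagonal matrix and then invoke the matrix determinant lemma. The key algebraic observation is that the off-diagonal entry in \eqref{eq: generica entrata matrice} factors as $x_{0}^{2}+x_{0}x_{i}+x_{0}x_{j}-x_{i}x_{j}=(x_{0}+x_{i})(x_{0}+x_{j})-2x_{i}x_{j}$, while the diagonal entry equals $(x_{0}+x_{i})^{2}$. Introducing the column vectors $u=(x_{0}+x_{1},\dots,x_{0}+x_{n})^{T}$ and $w=(x_{1},\dots,x_{n})^{T}$, together with the diagonal matrix $D=\mathrm{diag}(x_{1}^{2},\dots,x_{n}^{2})$, one checks directly that
\[
\M = 2D + uu^{T} - 2ww^{T},
\]
the extra diagonal contribution $2x_{i}^{2}$ being exactly what upgrades the off-diagonal formula to the diagonal value $(x_{0}+x_{i})^{2}$.

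Since each $x_{i}>0$, the matrix $2D$ is invertible, so I would write the perturbation as $uu^{T}-2ww^{T}=UW$, where $U=[\,u\mid w\,]$ has size $n\times 2$ and $W$ is the $2\times n$ matrix with rows $u^{T}$ and $-2w^{T}$. The matrix determinant lemma $\det(A+UW)=\det(A)\det(I_{2}+WA^{-1}U)$ then gives
\[
\det(\M)=\det(2D)\,\det\!\big(I_{2}+W(2D)^{-1}U\big)=2^{n}\Big(\textstyle\prod_{i=1}^{n}x_{i}^{2}\Big)\det\!\big(I_{2}+W(2D)^{-1}U\big),
\]
reducing the entire computation to a single $2\times 2$ determinant.

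To evaluate it, set $A_{1}=\sum_{i=1}^{n}x_{i}^{-1}$ and $A_{2}=\sum_{i=1}^{n}x_{i}^{-2}$. Short computations of the three relevant inner products give $u^{T}(2D)^{-1}u=\tfrac12(n+2x_{0}A_{1}+x_{0}^{2}A_{2})=:P$, $u^{T}(2D)^{-1}w=\tfrac12(n+x_{0}A_{1})=:Q$ and $w^{T}(2D)^{-1}w=\tfrac{n}{2}$. Hence $I_{2}+W(2D)^{-1}U$ is the $2\times2$ matrix with rows $(1+P,\,Q)$ and $(-2Q,\,1-n)$, and expanding its determinant as $(1+P)(1-n)+2Q^{2}$ and simplifying yields $\tfrac12\big[(2-n)+2x_{0}A_{1}+(1-n)x_{0}^{2}A_{2}+x_{0}^{2}A_{1}^{2}\big]$.

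It then remains to match this against \eqref{fondlemma}. Writing $S_{1}$ and $S_{2}$ for the two symmetric sums appearing there, I would factor them (by pulling the full product $x_{0}\prod_{i}x_{i}$ out of each summand) as $S_{1}=\big(\prod_{i=1}^{n}x_{i}\big)(1+x_{0}A_{1})$ and $S_{2}=\big(\prod_{i=1}^{n}x_{i}^{2}\big)(1+x_{0}^{2}A_{2})$. A direct expansion then shows $S_{1}^{2}-(n-1)S_{2}=\big(\prod_{i=1}^{n}x_{i}^{2}\big)\big[(2-n)+2x_{0}A_{1}+(1-n)x_{0}^{2}A_{2}+x_{0}^{2}A_{1}^{2}\big]$, i.e.\ $\prod_{i}x_{i}^{2}$ times exactly the bracket found above; combining this with the prefactor $2^{n}\prod_{i}x_{i}^{2}$ and the leading $\tfrac12$ collapses everything to $2^{n-1}[S_{1}^{2}-(n-1)S_{2}]$, as required. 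I expect the only genuinely delicate step to be this final identification, where one must verify that the squared term $x_{0}^{2}A_{1}^{2}$ (produced by squaring $S_{1}$) and the term $(1-n)x_{0}^{2}A_{2}$ (carrying the factor $n-1$) line up precisely; this is exactly where the shape of the right-hand side of \eqref{fondlemma} comes from.
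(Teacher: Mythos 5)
Your proof is correct, and I checked it line by line: the decomposition $\M = 2D + uu^{T} - 2ww^{T}$ does reproduce both the diagonal entries $(x_0+x_i)^2$ and the off-diagonal entries $(x_0+x_i)(x_0+x_j)-2x_ix_j$ of \eqref{eq: generica entrata matrice}; the inner products $P$, $Q$, $w^{T}(2D)^{-1}w=\tfrac{n}{2}$ are right; the $2\times 2$ determinant $(1+P)(1-n)+2Q^{2}$ does simplify to $\tfrac12\bigl[(2-n)+2x_0A_1+(1-n)x_0^2A_2+x_0^2A_1^2\bigr]$; and the factorizations $S_1=\bigl(\prod_{i=1}^n x_i\bigr)(1+x_0A_1)$, $S_2=\bigl(\prod_{i=1}^n x_i^2\bigr)(1+x_0^2A_2)$ make the final identification with $2^{n-1}[S_1^2-(n-1)S_2]$ exact. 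However, your route is genuinely different from the paper's. The paper writes $\M = A + vv^{t}$ with $v=u$ and $A_{ij}=-2(1-\delta_{ij})x_ix_j$ a hollow, non-diagonal matrix; it then computes $\det(A)$ by reduction to the matrix $B=cc^{t}-\mathbb{I}_n$, guesses and verifies the adjugate of $A$ entrywise (its Claim 2, the most laborious part of the argument), and applies the rank-one matrix determinant lemma $\det(A+uv^{t})=\det(A)+v^{t}\mathrm{Adj}(A)u$, which requires no invertibility hypothesis. You instead absorb the diagonal into $2D$ and treat the remainder as a rank-two perturbation, which eliminates the adjugate computation entirely and reduces everything to a $2\times 2$ determinant; the price is that you invoke the generalized identity $\det(A+UW)=\det(A)\det(I_2+WA^{-1}U)$ (Weinstein--Aronszajn/Sylvester combined with a Schur-complement step), which is \emph{not} the lemma stated in the paper's appendix and does require $2D$ invertible --- harmless here since all $x_i>0$, but you should state or cite that identity explicitly rather than calling it ``the matrix determinant lemma.'' Your closing normalization is essentially the same one the paper performs in its last two displayed lines, so the two arguments converge at the end; on balance your version is shorter and makes the structure of $\M$ more transparent.
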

\begin{proof}
See Appendix \ref{sec: Appendice}.
\end{proof}

\begin{remark}\label{rem: xi non per forza nascono da probabilita}
In the proof of Lemma \ref{lemma: determinante}, it is actually enough to assume that $x_{0},\dots,x_{n}$ satisfies the property introduced in \eqref{eq: generica entrata matrice}; thus from now on, we will weaken the assumption that $x_{0},\dots,x_{n}$ are probabilities: we will just assume that they are elements in $(0,1)$ satisfying equation \eqref{eq: generica entrata matrice}. 

\end{remark}

Lemma \ref{lemma: determinante} allows to provide a positive answer to the above state question. Indeed, as a corollary we get the embedding of the atoms from a finite metric Boolean algebra which are assigned with the same probability, according to the \emph{principle of indifference}.
\begin{corollary}\label{cor: immersione atomi con stessa probabilita}
Let $\A$ be a finite metric Boolean algebra with $k+1$ atoms ($|A| = 2^{k+1}$) and $m$ a finitely additive probability measure such that $m(a_{i}) = \frac{1}{k+1}$, for every $a_{i}\in\mathrm{At}(\A)$. Then $\mathrm{At}(\A)$ embeds in $\mathbb{R}^{k}$ with the Euclidean metric.
\end{corollary}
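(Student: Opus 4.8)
The plan is to invoke Morgan's theorem (Theorem \ref{th: teorema di Morgan}): to embed $(\mathsf{At}(\A), d_m)$ into $\mathbb{R}^{k}$ it suffices to show that this metric space is flat and has dimension at most $k$. Both conditions will follow from a single computation using the closed formula of Lemma \ref{lemma: determinante}, the key simplification being that the principle of indifference makes every atom interchangeable.

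First I would set $c = \frac{1}{k+1}$ and substitute $x_0 = x_1 = \cdots = x_n = c$ into \eqref{fondlemma}. Each summand $x_0\cdots \hat x_\alpha \cdots x_n$ is a product of $n$ of the variables and hence equals $c^{n}$; since there are $n+1$ such terms, the first sum equals $(n+1)c^{n}$. Likewise every summand of the second sum equals $c^{2n}$, so that sum equals $(n+1)c^{2n}$. Plugging in yields
\begin{align*}
\det(\M) &= 2^{n-1}\left[\big((n+1)c^{n}\big)^{2} - (n-1)(n+1)c^{2n}\right] \\
&= 2^{n-1}(n+1)c^{2n}\big[(n+1)-(n-1)\big] = 2^{n}(n+1)c^{2n}.
\end{align*}
In particular $\det(\M) > 0$ for every $n$ with $2\leq n\leq k$.

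Because all atoms carry the same measure, any choice of $n+1$ atoms produces the same matrix $\M$ up to a permutation of indices, hence the same determinant; so the computation above accounts for \emph{every} $n$-simplex of the space (the case $n=1$ being trivially $(x_0+x_1)^{2} = 4c^{2} > 0$). Consequently all simplex determinants are strictly positive, which gives flatness in the sense of Definition \ref{def: flat}. Moreover, since $\mathsf{At}(\A)$ has exactly $k+1$ points, the largest available simplex is a $k$-simplex, and the displayed formula shows it has positive determinant; thus the dimension in the sense of Definition \ref{def: dimensione} equals $k$. Flatness together with dimension $k\leq k$ lets Morgan's theorem deliver the desired isometric embedding into $\mathbb{R}^{k}$.

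This computation is essentially the entire argument, so I do not expect a serious obstacle. The only point deserving care is the justification that verifying a single simplex per value of $n$ suffices, which is precisely where the symmetry forced by the equal-measure hypothesis is used.
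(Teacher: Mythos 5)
Your proposal is correct and follows essentially the same route as the paper: both invoke Morgan's theorem and substitute the equal measures into the closed formula of Lemma \ref{lemma: determinante} to get a strictly positive determinant for every $n$ with $2\leq n\leq k$ (your extra remarks on the symmetry of the simplices and the trivial $n=1$ case only make explicit what the paper leaves implicit). In fact your value $\det(\M)=2^{n}(n+1)c^{2n}$ is the correct one, whereas the paper's displayed $2^{n-1}\cdot 2n\, x_{0}^{2n}$ contains a minor arithmetic slip ($(n+1)^{2}-(n^{2}-1)=2(n+1)$, not $2n$); since both quantities are positive, the conclusion is unaffected.
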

\begin{proof}
In virtue of Theorem \ref{th: teorema di Morgan}, it is enough to show that $\mathrm{At}(\A)$ is flat and has dimension $k$. By assumption $x_{0} = x_1 = \dots = x_k = \frac{1}{n+1} > 0$. Hence, applying Lemma \ref{lemma: determinante}, we get 
\begin{align*}
\det(\M) =& 2^{n-1}\left[ \left((n+1)x_{0}^{n}\right)^{2} - (n-1)(n+1)x_{0}^{2n}\right]  
\\  =&   2^{n-1}\left[ (n+1)^{2}x_{0}^{2n} - (n^{2}- 1)x_{0}^{2n}\right]
\\ = & 2^{n-1}2nx_{0}^{2n} > 0, 
\end{align*}
for every $2\leq n\leq k$.
\end{proof}

It follows from Theorem \ref{th: teorema di Morgan} and Lemma \ref{lemma: determinante}  that the space $\mathrm{At}(\A)$ of a Boolean algebra $\A$ with 3 atoms ($|A|=8$) embeds in $\mathbb{R}^{2}$ (it always true that a metric space of cardinality $3$ embeds isometrically in $\mathbb{R}^{2}$!). 

\begin{remark}\label{m2}
It is easy to check that $\det (M(x_{0},x_{1}, x_{2}))>0$, a property that we will apply insofar with no explicit mention.
\end{remark}

\begin{remark}\label{rem:omogeneo}
Observe that for any $\lambda\in\mathbb{R}$, $\det (M(\lambda x_0,\dots,\lambda x_{n})) = \lambda^{2n}\det( \M)$. Moreover, $\forall n$ ($2\leq n\leq k$), $\M\geq 0$ if and only if $\displaystyle (\sum_{\alpha=0}^{n} \frac{1}{x_\alpha})^2 - (n-1)\sum_{\alpha=0}^{n}\frac{1}{x_\alpha^{2}}\geq 0$.
\end{remark}

It is not always the case that the space of atoms of a (finite) metric Boolean algebra embeds in $\mathbb{R}^{N}$. In the following we consider the probability assignment in accordance with the \emph{binomial distribution}.

\begin{example}\label{ex: misura che non si immerge}
The binomial distribution (with parameters $n$ and $p$) is the probability distribution of the number of successes in a sequence of $n$ independent experiments (Bernoulli process), each asking a ``yes-no'' question, and each with a two-valued outcome: success (with probability $p$) or failure (with probability $q = 1 - p$). Thus the Boolean algebra of events is $\mathcal{P}(\Omega)$, where $\Omega = \{1,\dots, n\}$ and atoms consists of all the sequences (regardless of the order) of successes and failures. The probability $x_\alpha = m(a_\alpha)$ of an atom $a_{\alpha}$ of $\mathcal{P}(\Omega)$ is:
$$x_{\alpha} = {n \choose \alpha  } p^{\alpha}(q)^{n-\alpha},$$
with $p\in (0,1)$ and $q = 1-p$. For the sake of simplicity, we set $p = q = \frac{1}{2}$. Relying on Remark \ref{rem:omogeneo}, it is easy to check that $M(x_{0},\dots, x_{3}), M(x_{0},\dots, x_{4}) > 0$. On the other hand, for $n=5$, we have $(1 + \frac{1}{5} + \frac{1}{10} + \frac{1}{10} + \frac{1}{5} + 1)^{2} - 4(1 + \frac{1}{25} + \frac{1}{100} + \frac{1}{100} + \frac{1}{25} + 1) = -\frac{41}{25}$. 
\end{example}

\section{Main results}\label{sez: principale}

Let $\mathcal{M}(\A)$ be the space of probability measures over a finite Boolean algebra $\A$, with $|A|=2^{k+1}$ and $\mathcal {M(\mathrm{At}(\A))}$ the space of probability measures over the atoms of $\A$ ($|\mathrm{At}(\A)| = k+1$). $\mathcal {M(\mathrm{At}(\A))}\subset (0, 1)^{k+1}$ is a convex (open)
subspace of $(0, 1)^{k+1}$ (we are considering only the positive measures). $\mathcal {M(\mathrm{At}(\A))}$ can be naturally identified  
with  the space $(0, 1)^{k+1}=(0, 1)\times\cdots\times (0, 1)$ by identifying  a measure
$m\in \mathcal {M(\mathrm{At}(\A))}$ with its values $\vx = (x_0, x_1, \dots , x_k)\in \mathbb{R}_+^{k+1}$, $x_\alpha=m(a_\alpha)$, $\alpha=0, \dots, k$.

The space $\mathcal {M(\mathrm{At}(\A))}$ can be described as: 
$$\mathcal{M}(\mathrm{At}(\A)) = (0, 1)^{k+1}\cap \Pi_{k},$$
where $\Pi_{k}$ is the standard  $k$-simplex (or probability simplex) of $\mathbb{R}^{k+1}$, namely
$$\Pi_{k}=\{\vx\in (0, 1)^{k+1}\ | \ \sum_{\alpha=0}^kx_{\alpha}=1\}.$$

At the light of the above discussion, we define the space of measures 
${\mathcal M}_{ind}(\mathrm{At}(\A))$ induced by the flat metric of $\mathbb{R}^{N}$, namely those measures $m$ such that  $(\mathrm{At}(\A), d_m)$ admits an isometric embedding into some Euclidean space $\mathbb{R}^N$. By Morgan's theorem, $m\in {\mathcal M}_{ind}(\mathrm{At}(\A))$ if and only if the metric space  $(\mathrm{At}(\A), d_m)$ is flat, hence:



\begin{equation}\label{ugvarie}
{\mathcal M}_{ind}(\mathrm{At}(\A))=\bigcap_{n=3}^k
C_n\cap\Pi_{k},
\end{equation}

where 
\begin{equation}\label{defcn}
C_n=\{\vx\in \mathbb{R}_+^{k+1}\ |\ \det (\M)\geq 0 \}, \text{ with }\  3\leq n\leq k.
\end{equation}

Notice that we are taking $\vx\in \mathbb{R}_+^{k+1}$ and not $\vx\in (0, 1)^{k+1}$. 
We are interested in the solution of the following.

\vspace{5pt}

\noindent
{\bf Problem.}
Study the topology of $\mathcal{M}_{ind}(\mathrm{At}(\A))$, and of its complement $\mathcal{M}(\mathrm{At}(\A))\setminus \mathcal{M}_{ind}(\mathrm{At}(\A))$, with the topology induced by $(0, 1)^{k+1}\subset\mathbb{R}_+^{k+1}$.
\vspace{5pt}


The main contribution of the present paper is the solution to the above mentioned problem (see Theorem \ref{th: principale}). In order to tackle it, we begin by analyzing the topology of $C_n$.

\begin{lemma}\label{lemma: Cn contraibile}
For each $3\leq n \leq k$, the  space $C_n\cong H_n\times \mathbb{R}_+^{k-n}$ where $H_n$ is  a solid  half-hypercone  
in $\mathbb{R}_+^{n+1}$.
\end{lemma}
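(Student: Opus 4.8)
The plan is to exploit two features of the inequality defining $C_n$: that $\det(\M)$ depends only on the first $n+1$ coordinates, and that a reciprocal change of variables turns it into a single quadratic inequality of Lorentzian type. First I would record the product decomposition: by Lemma \ref{lemma: determinante}, $\det(\M)$ is a function of $x_0,\dots,x_n$ alone and does not involve $x_{n+1},\dots,x_k$, so the condition $\det(\M)\geq 0$ cutting out $C_n$ inside $\mathbb{R}_+^{k+1}$ imposes no constraint on the last $k-n$ coordinates, which range freely over $\mathbb{R}_+$. This gives at once $C_n=\widetilde H_n\times\mathbb{R}_+^{k-n}$ with $\widetilde H_n=\{(x_0,\dots,x_n)\in\mathbb{R}_+^{n+1}\mid \det(\M)\geq 0\}$, reducing everything to the analysis of $\widetilde H_n$.

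Next I would pass to reciprocal coordinates. By Remark \ref{rem:omogeneo}, on $\mathbb{R}_+^{n+1}$ the inequality $\det(\M)\geq 0$ is equivalent to $\big(\sum_{\alpha=0}^n \frac{1}{x_\alpha}\big)^2-(n-1)\sum_{\alpha=0}^n\frac{1}{x_\alpha^2}\geq 0$. The assignment $y_\alpha=1/x_\alpha$ is a self-homeomorphism of the open orthant $\mathbb{R}_+^{n+1}$, under which $\widetilde H_n$ is carried to $H_n=\{y\in\mathbb{R}_+^{n+1}\mid Q(y)\geq 0\}$, where $Q(y)=\big(\sum_\alpha y_\alpha\big)^2-(n-1)\sum_\alpha y_\alpha^2$. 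Applying this homeomorphism on the first $n+1$ coordinates and the identity on the remaining ones upgrades the previous decomposition to $C_n\cong H_n\times\mathbb{R}_+^{k-n}$.

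Finally I would identify $H_n$ as a solid half-hypercone. The Gram matrix of $Q$ is $J-(n-1)I$, with $J$ the all-ones $(n+1)\times(n+1)$ matrix; its eigenvalues are $2$ along the diagonal axis $\mathbf 1=(1,\dots,1)$ and $-(n-1)$, of multiplicity $n$, on $\mathbf 1^{\perp}$. For $n\geq 3$ the form $Q$ therefore has Lorentzian signature $(1,n)$, so $\{Q\geq 0\}$ is a solid double cone with axis $\mathbf 1$; being homogeneous it is a cone, and since the solid forward nappe and the orthant are convex, $H_n$ is a convex, hence contractible, cone. As $\mathbf 1\in\mathbb{R}_+^{n+1}$ and every $y$ in the orthant has $\sum_\alpha y_\alpha>0$, the orthant meets only the forward nappe, which is the ``half'' in the statement. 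The step needing care is precisely this interaction with the orthant boundary: the forward nappe is not contained in $\mathbb{R}_+^{n+1}$ (for instance, when $n=3$ the point $y=(3,3,3,-1)$ satisfies $Q(y)=8>0$ but has a negative coordinate), so $H_n$ is the forward solid nappe clipped to the orthant rather than the entire nappe. I would thus phrase the conclusion as: $H_n$ is the intersection of $\mathbb{R}_+^{n+1}$ with one solid nappe of the quadric cone $\{Q=0\}$, a solid convex half-hypercone, and this is the factor to which $C_n$ is homeomorphic.
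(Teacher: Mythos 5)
Your argument is correct and, up to its final step, coincides with the paper's: the paper likewise splits off the last $k-n$ coordinates, applies the coordinatewise reciprocal homeomorphism $\Phi(\vx)=(1/x_0,\dots,1/x_k)$ justified by Lemma \ref{lemma: determinante} and Remark \ref{rem:omogeneo}, and orthogonally diagonalizes the resulting quadratic form, whose matrix has a simple eigenvalue on the axis $\mathbf{1}=(1,\dots,1)$ and an eigenvalue of the opposite sign with multiplicity $n$ on $\mathbf{1}^{\perp}$ (your Gram matrix $J-(n-1)I$ is the negative of the paper's matrix $A$), so that $\Phi(C_n)$ is the part of a solid Lorentzian cone with axis $\mathbf{1}$ lying in the open orthant, times $\mathbb{R}_+^{k-n}$.

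Where you and the paper part ways is exactly the point you flag as delicate, and your version is the right one. The paper asserts that the forward nappe is entirely contained in $\mathbb{R}_+^{n+1}$, arguing that the half-angle $\alpha$ of the cone is smaller than the angle $\alpha_i$ between the axis and the basis vectors $e_i$, where $\cos\alpha_i=1/\sqrt{n+1}$. That is a comparison against the wrong angle: containment of a cone with axis $\mathbf{1}$ in the orthant requires its half-angle not to exceed the angle between the axis and the bounding hyperplanes $\{y_i=0\}$, which is $\arcsin(1/\sqrt{n+1})=\pi/2-\alpha_i$, not $\alpha_i$ itself. In coordinates adapted to $\mathbf{1}$ one computes $\sin\alpha=\sqrt{2/(n+1)}>1/\sqrt{n+1}$, so the nappe crosses every coordinate hyperplane for all $n$ (this persists even with the paper's stated value $\cos\alpha=\sqrt{(n-2)/n}$, since then $\sin\alpha=\sqrt{2/n}>1/\sqrt{n+1}$); your point $(3,3,3,-1)$ for $n=3$, with $Q=64-56=8>0$ and positive inner product with $\mathbf{1}$, is a concrete witness that the paper's claim $P(H_n)\subset\mathbb{R}_+^{n+1}$ fails. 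Consequently the factor really is the forward solid nappe clipped to the open orthant, as you say, and the lemma should be read (or restated) that way: strictly speaking a partially-closed convex cone need not be homeomorphic to a round closed half-hypercone. This correction costs nothing downstream: the clipped set is the intersection of two convex cones, hence a convex cone with nonempty interior (it contains $\mathbf{1}$), and convexity, homogeneity and contractibility are all that the proof of Theorem \ref{th: principale} actually uses.
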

\begin{proof}
Consider the involutive homeomorphism 
\begin{equation}
\Phi: \mathbb{R}_+^{k+1}\rightarrow \mathbb{R}_+^{k+1}, \vx=(x_0, \dots ,x_k)\mapsto ( \frac{1}{x_0},\dots, \frac{1}{x_{k}}).
\end{equation}


\noindent
In view of Lemma \ref{lemma: determinante} and Remarks \ref{m2} and \ref{rem:omogeneo} the image of $C_n$ via $\Phi$ is given by 

$$\Phi(C_n)=\{(z_0, \dots ,z_k)\in \mathbb{R}_+^{k+1} \ |\ \left(z_0+\cdots +z_n\right)^2- (n-1)(z_0^2+\cdots +z_n^2)\geq 0, \; 3\leq n\leq k\},$$
Equivalently,
$$\Phi(C_n)=\{(z_0, \dots ,z_k)\in \mathbb{R}_+^{k+1} \ |\ (n-2)\sum_{\alpha=0}^nz^2_\alpha -\sum_{\alpha, \beta=0}^n z_\alpha z_\beta\leq 0 \}  $$


\noindent
We claim that, for every $3\leq n\leq k$, $\Phi(C_{n})$ is  affinely homeomorphic the product of  a solid  half-hypercone  
$H_n\subset\mathbb{R}_+^{n+1}$ with $\mathbb{R}_+^{k-n},$ i.e. $\Phi(C_{n})\cong H_n\times\mathbb{R}_+^{k-n}\subset\mathbb{R}_+^{k+1}$. In order to show the claim, observe that 
$$\Phi(C_{n}) =\{(z_0, \dots ,z_k)\in \mathbb{R}_+^{k+1} \ |\; \vec{z}^{\;t}A\vec{z}\leq 0 \},$$ 
where $A$ is the matrix (of order $n+1$)
$$A =  \begin{pmatrix}
n-2 & -1 & \cdots & -1 \\
-1 & n-2 & \cdots & -1 \\
\vdots & \vdots & \ddots & \vdots \\
-1 & -1 & \cdots & n-2 \\
\end{pmatrix}. $$

\noindent
It is not difficult to check that the eigenvalues of $A$ are $\lambda_{0}=\lambda_{1}=\dots=\lambda_{n-1} = n-1$ and $\lambda_{n} = -2$ and an orthonormal basis of eigenvectors 
$$v_{0} = \frac{1}{\sqrt2}\begin{pmatrix}
1  \\
-1 \\
0 \\
\vdots\\
0  \\
\end{pmatrix} ,\  v_{i} =c_{i}\begin{pmatrix}
1  \\
\vdots \\
1 \\
-(i+1)\\
0  \\
\vdots
\end{pmatrix},\  v_{n} =\frac{1}{\sqrt{n+1}}\begin{pmatrix}
1  \\
1 \\
1 \\
\vdots\\
1  \\
\end{pmatrix},$$ 
with $c_{i}=\frac{1}{\sqrt{(i+1)(i+2)}}$, for $i=\{1,\dots,n-1\}$, and $1 $ in the first $i+1$ entries. 
Thus if $P=(v_0\dots v_n)$ is the associated  orthogonal matrix and $D$ the diagonal matrix of eigenvalues we can write

$$\Phi(C_{n}) =\{(z_0, \dots ,z_k)\in \mathbb{R}_+^{k+1} \ |\; \vec{z}^{\;t}PDP^{t}\vec{z}\leq 0 \}=
 \{(y_0, \dots ,y_k)\in \mathbb{R}^{k+1} \ |\; \vec{y}^{\;t}D\vec{y}\leq 0 \}\cap\{P\vec{y} > 0\}$$

$$ = \{(y_0, \dots ,y_k)\in \mathbb{R}^{k+1} \ |(n-1)^{2}(y_{0}^{2} + \dots + y_{n-1}^{2}) - 2y_{n}^{2}\leq 0 \}\cap\{P\vec{y} > 0\}, $$
\noindent
with $\vec{y} = P^{t}\vec{z}$. The affine transformation $P^{-1}=P^t\colon \mathbb{R}^{k+1}\rightarrow \mathbb{R}^{k+1}$ shows that, for every $3\leq n\leq k$, 
$\Phi(C_{n})$ is affinely homeomorphic to
$$(H_n\cap \mathbb{R}_+^{n+1})\times\mathbb{R}_+^{k-n},$$
where $H_n=\{(y_0, \dots ,y_n)\in \mathbb{R}^{n+1} \ |(n-1)^{2}(y_{0}^{2} + \dots + y_{n-1}^{2}) - 2y_{n}^{2}\leq 0 \}$
is a solid hypercone in $\mathbb{R}^{n+1}$.
In order to prove the claim we have to show that $H_n\subset \mathbb{R}_+^{n+1}$.

Observe that $H_n$ is indeed  obtained by a  rotation around the line generated by $(0, \dots, 0,  1)$ of angle $\alpha$, with $\cos(\alpha) = \sqrt{\frac{n-2}{n}}$.
Hence $P(H_n)$ is a solid hypercone obtained by a  rotation around the line generated by $(1, \dots, 1,  1)\in\mathbb{R}^{n+1}$ of the same angle. On the other hand, the angle $\alpha_{i}$ between the vector $(1,\dots,1)$ and each vectors of the canonical basis  $e_{i}$ of $\mathbb{R}^{n+1}$ satisfies $\cos(\alpha_{i}) = \frac{1}{\sqrt{n+1}} $, for every $i\in\{0,\dots, n\}$. 
 Thus, since $n\geq 3$, 
 $$\sqrt{\frac{n-2}{n}}=\cos(\alpha) > \cos (\alpha_{i})=\frac{1}{\sqrt{n+1}}, \ i=0, \dots , n.$$
It follows that $\alpha < \alpha_i$ therefore $P(H_n)\subset\mathbb{R}_{+}^{n+1}$ proving our claim and concluding the proof of the lemma.
\end{proof}



%

In order to provide an answer to the above stated problem, we need to prove some rather technical lemmas, whose proofs consist of adaptations of the proof techniques used in a standard results for compact convex subsets (with non-empty interior) in $\mathbb{R}^{k}$ (see e.g. \cite[Proposition 5.1]{Lee2010}).

Recall that a set $X\subseteq S^{k}$ in the unit $k$-dimensional sphere $S^{k}\subset \mathbb{R}^{k+1}$ is geodesically convex if, for every $x_1 , x_2 \in X$ there exits a (unique) minimal geodesic connecting them in $X$. 

\begin{lemma}\label{lemma 1: intersezione in 1 punto}
Let $X\subseteq S_{+}^{k} = S^{k}\cap\mathbb{R}_{+}^{k+1}$ be a compact, geodesically convex set with $\mathrm{Int}(X)\neq\emptyset$. Then any geodesic starting in $p\in\mathrm{Int}(X)$ intersects $X$ in exactly one point.
\end{lemma}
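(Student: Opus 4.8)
The plan is to mimic the standard Euclidean argument that a ray from an interior point of a compact convex body meets the boundary in a single point, but transported to the sphere $S^k_+$ where "lines" become geodesics (great-circle arcs) and "convexity" becomes geodesic convexity. Let $p\in\mathrm{Int}(X)$ and let $\gamma\colon[0,L)\to S^k_+$ be a geodesic with $\gamma(0)=p$, parametrized by arc length. Since $X$ is compact, the geodesic cannot stay in $X$ indefinitely, so I would first show that the set $\{t\ge 0 : \gamma(t)\in X\}$ is a bounded (by compactness), closed (by compactness of $X$ and continuity of $\gamma$) subset of $[0,\infty)$ that contains an initial interval $[0,\varepsilon)$ (because $p$ is interior). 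The goal is to prove this set is exactly a closed interval $[0,t_1]$ with a \emph{unique} endpoint on the boundary, and that $\gamma(t)\notin X$ for $t>t_1$.

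The heart of the argument is the following claim, which I would isolate first: if a geodesic starting at an interior point $p$ passes through two points $q_1=\gamma(t_1)$ and $q_2=\gamma(t_2)$ of $X$ with $t_1<t_2$, then $q_1\in\mathrm{Int}(X)$. The idea is a "cone/segment" argument. Take a small geodesic ball $B$ around $p$ contained in $\mathrm{Int}(X)$; for each point $b\in B$, geodesic convexity gives the minimal geodesic from $b$ to $q_2$ lying in $X$. Sweeping $b$ over the open set $B$ produces a union of such geodesics, and I would argue this union contains an open neighborhood of $q_1$ (since $q_1$ lies strictly between $p$ and $q_2$ along a geodesic, it lies in the interior of the "geodesic cone" with apex $q_2$ and base $B$). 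The main technical point here is that the geodesic from $b$ to $q_2$ depends continuously on $b$ and that the map $(b,s)\mapsto(\text{point at parameter }s)$ is a local homeomorphism near $(p,q_1)$ onto a neighborhood of $q_1$; on $S^k_+$ this is legitimate because in the positive orthant the relevant points are confined to an open hemisphere where geodesics between points are unique and depend smoothly on their endpoints, via the exponential map. I expect \emph{this} step — verifying that the geodesic cone has $q_1$ in its interior, and handling the uniqueness/smoothness of geodesics on $S^k_+$ — to be the main obstacle.

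Granting the claim, uniqueness follows cleanly. Let $t_1=\sup\{t : \gamma(t)\in X\}$, which is finite and attained (so $\gamma(t_1)\in X$) by closedness and boundedness. If some $t_0>t_1$ had $\gamma(t_0)\in X$ this would contradict the definition of $t_1$, so $\gamma(t)\notin X$ for all $t>t_1$. It remains to show $\gamma(t_1)\in\partial X$ and that no \emph{earlier} boundary crossing can occur. By the claim, every $\gamma(t)$ with $0\le t<t_1$ lies in $\mathrm{Int}(X)$, because each such point is strictly between the interior point $p$ and the point $\gamma(t_1)\in X$. Hence the geodesic meets $\partial X=X\setminus\mathrm{Int}(X)$ only possibly at $t=t_1$. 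Finally $\gamma(t_1)$ itself must be on the boundary: if it were interior, the geodesic could be continued a little past $t_1$ while staying in $X$, contradicting maximality of $t_1$. Therefore the geodesic intersects $\partial X$ in exactly the single point $\gamma(t_1)$, which is the assertion.

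I would close by noting that the two ingredients used about $S^k_+$ — that any two of its points are joined by a unique minimal geodesic, and that such geodesics vary continuously with their endpoints — hold precisely because $S^k_+$ is contained in an open hemisphere (all coordinates positive forces pairwise angular distance $<\pi$), so convexity of the ambient geodesic structure behaves just as in the Euclidean prototype of \cite[Proposition 5.1]{Lee2010}, with straight segments replaced by minimizing arcs.
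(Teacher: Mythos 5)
Your proof is correct and takes essentially the same route as the paper's: compactness of $X$ yields a last point $x_0$ along the geodesic, and the ``ice-cream cone'' of minimal geodesics from a ball $B_r(p)\subseteq X$ to $x_0$ (contained in $X$ by geodesic convexity) shows every strictly earlier point is interior, so the geodesic meets $\partial X$ exactly at $x_0$. Your reading of the conclusion as ``exactly one point of $\partial X$'' is the correct interpretation (it is what the application to Lemma \ref{lemma 3: corona} requires), and your exponential-map justification that the cone contains a neighborhood of intermediate points merely fills in a detail the paper's sketch leaves implicit.
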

\begin{proof}
Let $g$ be a geodesic starting in $p\in \mathrm{Int}(X)$. Since $X$ is compact, then $X\cap g$ is compact and thus bounded. So let $x_{0}$ be the point having the maximal (spherical) distance from $p$, $x_{0} = \max\{d(x,p)\;|\; x\in X\}$. Let $B_{r}(p)$ an open (spherical) ball contained in $X$ and $I_{c} = \{\overline{yx_{0}} \;|\; y\in B_{r}(p) \text{ and } \overline{yx_{0}} \text{ the geodesic segment connecting } y \text{ to } x_{0} \}$ ($I_{c}$ is the ``ice-cream cone'' formed by the geodesic from $B_{r}(p)$ to $x_{0}$). Since $X$ is geodesically convex, then, for every $x\in g\smallsetminus\{x_{0}\}$, there is an open ball $B_{r_x}(x)\subseteq X$. This holds for every $x\in g\smallsetminus\{x_{0}\}$ and thus shows that $X\cap g = \{x_0\}$.
\end{proof}

\begin{lemma}\label{lemma 2: omeomorfismo disco}
Let $K\subset\mathbb{R}^{k+1}$ be a compact, star-shaped space with respect to $0$, with $\mathrm{Int}(K)\neq\emptyset$ and such that any ray from $0$ intersects $K$ in exactly one point. Then there exists a homeomorphism $F\colon \overline{B_1(0)}\to K$ such that $F(S^{k})= \partial K$, where $B_1(0)\subset \mathbb{R}^{k+1}$ denotes the open unit ball centered at the origin.
\end{lemma}
\begin{proof}
Define $f\colon\partial K\to S^{k}$, $x\mapsto f(x)=\frac{x}{|x|}$, i.e. $f(x)$ is the intersection of the ray (from $O$) with the sphere $S^{k}$. By definition, $f$ is continuous and, since any ray from $0$ intersects $S$ in exactly one point, is also bijective. Since $\partial K$ is compact, $f$ is a homeomorphism by the closed map lemma. We then define $F\colon \overline{B_1(0)}\to K$ as $x\mapsto\begin{cases} |x|f^{-1}(\frac{x}{|x|}) & \text{ if } x\neq 0, \\
0, & \text{ if } x= 0.
\end{cases}$
$F$ takes every radial segment $\overline{0x}$ with $x\in S^{k-1}$ in the radial segment $\overline{0f^{-1}(x)}$ in $K$, with $f^{-1}(x)\in\partial K$  (it is well defined since 
$S$ is star-shaped in $0$).
$F$ is continuous as $f^{-1}$ is and $\displaystyle\lim_{x\to 0}F(x) = 0$; it is injective as any ray from $0$ intersects $K$ in exactly one point and surjective as any $y\in K$ belongs to some ray. Finally, since $K$ is compact, $F$ is a homeomorphism by the closed map lemma. 
\end{proof}

\begin{lemma}\label{lemma 3: corona}
Let $K_1, K_2\subseteq\mathbb{R}^{k+1}$ be compact, star-shaped spaces with respect to $0$, with $\mathrm{Int}(K_{i})\neq\emptyset$ (for $i=1,2$), $K_1\subset \mathrm{Int}(K_2)$ and such that any ray from $0$ intersects $\partial K_{i}$ (for $i=1,2$) in exactly one point. Then there exists a homeomorphism $F\colon \overline {B_2(0)}\setminus B_1(0)\to K_2\setminus\mathrm{Int}(K_{1})$ such that $F(S_{1}^{k-1})= \partial K_1 $ and $F(S_{2}^{k-1})= \partial K_2 $. In particular
$$\Int K_2\setminus K_1\cong B_2(0)\setminus \overline{B_1(0)}\cong S^k\times (0, 1).$$
\end{lemma}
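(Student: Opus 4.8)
The plan is to reuse the radial construction of Lemma \ref{lemma 2: omeomorfismo disco}, now applied simultaneously to both boundaries and interpolated across the annular region. For each $i\in\{1,2\}$, the map $f_i\colon\partial K_i\to S^k$, $x\mapsto x/|x|$, is continuous and bijective (the hypothesis that every ray from $0$ meets $\partial K_i$ in exactly one point gives injectivity and surjectivity), and since $\partial K_i$ is compact it is a homeomorphism by the closed map lemma, exactly as in Lemma \ref{lemma 2: omeomorfismo disco}. Its inverse encodes a continuous \emph{radial function} $r_i\colon S^k\to(0,\infty)$, $f_i^{-1}(u)=r_i(u)\,u$. Star-shapedness together with radial uniqueness ensures that the ray in direction $u$ meets $K_i$ exactly in the segment $\{s\,u\mid 0\le s\le r_i(u)\}$, with $r_i(u)\,u$ the unique boundary point and $\{s\,u\mid 0\le s<r_i(u)\}$ interior points.

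First I would record the key inequality $r_1(u)<r_2(u)$ for every $u\in S^k$. This holds because $r_1(u)\,u\in\partial K_1\subseteq K_1\subseteq\Int(K_2)$, so $r_1(u)\,u$ lies strictly inside $K_2$ and hence has radial coordinate strictly below the unique boundary value $r_2(u)$. Consequently, along each direction $u$ the shell $K_2\setminus\Int(K_1)$ meets the ray exactly in $\{s\,u\mid r_1(u)\le s\le r_2(u)\}$.

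Next I would define $F\colon\overline{B_2(0)}\setminus B_1(0)\to K_2\setminus\Int(K_1)$ by
$$F(w)=\big((2-t)\,r_1(u)+(t-1)\,r_2(u)\big)\,u,\qquad t=|w|,\ u=w/|w|,$$
which linearly interpolates the radial coordinate from $r_1(u)$ at $t=1$ to $r_2(u)$ at $t=2$. Since $|w|\ge 1>0$ on the closed annulus, $u$ and $t$ depend continuously on $w$, and $r_1,r_2$ are continuous, so $F$ is continuous. For fixed $u$ the scalar $t\mapsto(2-t)\,r_1(u)+(t-1)\,r_2(u)$ is strictly increasing (as $r_1(u)<r_2(u)$) with image $[r_1(u),r_2(u)]$, giving injectivity along each ray, while distinct rays meet only at $0$, which is excluded; surjectivity onto the shell follows by solving the same affine equation for $t\in[1,2]$ given any $y=s\,u$ with $r_1(u)\le s\le r_2(u)$. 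Being a continuous bijection from a compact space to a Hausdorff space, $F$ is a homeomorphism by the closed map lemma, and the endpoint evaluations at $t=1,2$ give $F(S^k_1)=\partial K_1$ and $F(S^k_2)=\partial K_2$ (here $S^k_i$ denotes the sphere of radius $i$ in $\mathbb{R}^{k+1}$).

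Finally, restricting $F$ to the open annulus $1<|w|<2$ yields a homeomorphism onto $\{s\,u\mid r_1(u)<s<r_2(u)\}=\Int(K_2)\setminus K_1$, establishing $\Int K_2\setminus K_1\cong B_2(0)\setminus\overline{B_1(0)}$, and the further identification with $S^k\times(0,1)$ is the standard polar homeomorphism $w\mapsto(w/|w|,\,|w|-1)$. The only genuinely delicate points are the continuity of the radial functions $r_i$ (which is precisely the content of the closed map lemma argument already used in Lemma \ref{lemma 2: omeomorfismo disco}) and the strict inequality $r_1<r_2$; once these are in hand the rest is the routine verification just sketched, so I expect no serious obstacle beyond carefully transcribing the radial picture.
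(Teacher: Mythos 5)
Your proof is correct and takes essentially the same route as the paper: both obtain the boundary homeomorphisms from Lemma \ref{lemma 2: omeomorfismo disco} (your $f_i^{-1}(u)=r_i(u)\,u$ is exactly the paper's $F_1$, $F_2$), define $F$ by the same affine radial interpolation between the two boundaries, and conclude via the closed map lemma and restriction to the open annulus. Your explicit recording of the strict inequality $r_1(u)<r_2(u)$ (from $K_1\subset\mathrm{Int}(K_2)$) is a marginally cleaner way to get injectivity along each ray than the paper's case analysis resting on $a\neq b$, but it is the same argument in substance.
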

\begin{proof}
By Lemma \ref{lemma 2: omeomorfismo disco}, there are two homeomorphisms $F_1\colon S_{1}^{k-1}\to \partial K_1 $, $F_2\colon S_{2}^{k-1}\to \partial K_2 $. Define 
$$F\colon \overline {B_2(0)}\setminus B_1(0))\to K_2\setminus\mathrm{Int}(K_{1}), \ x\mapsto F(x) = (2-|x|)F_{1}(\frac{x}{|x|}) + (|x|-1)F_{2}(\frac{2x}{|x|}).$$ 
By definition (and the continuity of $F_1$, $F_2$), $F$ is continuous and surjective. To see that $F$ is injective, observe that, for points $x_1, x_2$ belonging to different rays, this follows from the assumption that every ray from $0$ intersects $\partial K_{i}$ ($i=1,2$) in exactly one point. Differently, let $x_2 = \lambda x_{1}$ (w.l.o.g. $\lambda > 0$) and $F(x_{1}) = F(x_{2})$. Upon setting $F_{1}(\frac{x}{|x|}) = a\in\partial K_1$ and $F_{2}(\frac{x}{|x|}) = b\in\partial K_1$, we have $(2-|x_1|)a + (|x_1|-1)b = (2-|x_2|)a + (|x_2|-1)b$, thus $(|x_2| - |x_1|)(a-b) = 0$ and, since $a\neq b$, $|x_1| = |x_2|$, i.e. $x_1 = x_2$ (as $x_2 = \lambda x_{1}$). Finally, $F$ is a homeomorphism by the closed map lemma. The last part follows by restricting $F$ to $\Int K_2\setminus K_1$ and by the fact that the annulus $B_2(0)\setminus \overline{B_1(0)}$ is homeomorphic to $S^k\times (0, 1)$.
\end{proof}

The solution to the above presented problem is given by the following. 

\begin{theorem}\label{th: principale}
Let $k\geq 3$. Then:
\begin{enumerate}
\item
${\mathcal M}_{ind}(\mathrm{At}(\A))$  is  contractible.
\item
$\mathcal{M}(\mathrm{At}(\A))\setminus{\mathcal M}_{ind}(\mathrm{At}(\A))$  is  simply-connected (not contractible).

\end{enumerate}
\end{theorem}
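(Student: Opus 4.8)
The plan is to reduce both statements to standard facts about a \emph{convex} body sitting inside the open simplex, by first \emph{convexifying} $\mathcal{M}_{ind}(\mathrm{At}(\A))$ through the reciprocal homeomorphism $\Phi$ of Lemma \ref{lemma: Cn contraibile}. This step is essential: in the original coordinates $\mathcal{M}_{ind}(\mathrm{At}(\A))=\bigcap_{n=3}^k C_n\cap\Pi_k$ is only an intersection of (scale-invariant but non-convex) regions and need not be star-shaped with respect to any point, whereas after applying $\Phi$ it becomes genuinely convex. First I would record that, by the eigenvalue computation in the proof of Lemma \ref{lemma: Cn contraibile}, each $\Phi(C_n)$ is a single nappe of a solid Lorentz cone times a positive orthant, hence convex; therefore $\tilde K:=\Phi\bigl(\bigcap_{n=3}^k C_n\bigr)=\bigcap_{n=3}^k\Phi(C_n)$ is a convex cone in $\mathbb{R}_+^{k+1}$.

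Next I would exploit that $\Phi$ sends every ray through the origin onto itself, since $\Phi(\lambda\vx)=\lambda^{-1}\Phi(\vx)$. Composing $\Phi$ with the radial rescaling $\vx\mapsto\vx/\sum_\alpha x_\alpha$ onto the affine hyperplane $\{\sum_\alpha z_\alpha=1\}$ yields an involutive homeomorphism $\Psi$ of the open simplex $\mathcal{M}(\mathrm{At}(\A))$ onto itself that fixes the uniform measure $u=(\tfrac1{k+1},\dots,\tfrac1{k+1})$ and carries $\mathcal{M}_{ind}(\mathrm{At}(\A))$ onto the convex set $L:=\tilde K\cap\{\sum_\alpha z_\alpha=1,\ z_\alpha>0\}$; indeed $\vx\in\bigcap_n C_n$ iff $\Phi(\vx)\in\tilde K$, and since $\tilde K$ is a cone this is unchanged by the rescaling. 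Because the single constraint defining $C_k$ involves all coordinates and fails whenever some $x_\alpha\to 0$ (there the reciprocal expression of Remark \ref{rem:omogeneo} behaves like $-(k-2)(1/x_\alpha)^2<0$ for $k\ge 3$, so $\det(\M)<0$), $\mathcal{M}_{ind}(\mathrm{At}(\A))$—and hence $L$—stays uniformly away from $\partial\Pi_k$; thus $L$ is a \emph{compact} convex body, and by the strict inequality in Corollary \ref{cor: immersione atomi con stessa probabilita} one has $u\in\mathrm{Int}(L)$.

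For part (1) I would then apply Lemma \ref{lemma 2: omeomorfismo disco} after translating $u$ to the origin inside the $k$-dimensional hyperplane $\{\sum_\alpha x_\alpha=1\}$: a compact convex body with $u$ in its interior is star-shaped and every ray from $u$ meets its boundary exactly once (this last property is exactly what Lemma \ref{lemma 1: intersezione in 1 punto} yields after projecting the convex cone $\tilde K$ to $S^{k}_+$, where it becomes geodesically convex). Hence $L\cong\overline{B^{k}}$, so $L$, and therefore $\mathcal{M}_{ind}(\mathrm{At}(\A))=\Psi^{-1}(L)$, is contractible. For part (2) I would feed the nested pair $K_1=L\subset\mathrm{Int}(K_2)$, with $K_2=\overline{\Pi_k}$ the closed simplex, into Lemma \ref{lemma 3: corona}: both are compact, convex, star-shaped with respect to $u$ with each ray meeting each boundary once, and $L$ lies compactly inside the open simplex by the properness just established. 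Applied inside the $k$-dimensional hyperplane (so its $S^{k}$ becomes $S^{k-1}$), the lemma gives $\mathcal{M}(\mathrm{At}(\A))\setminus\mathcal{M}_{ind}(\mathrm{At}(\A))\cong\Pi_k^{\mathrm{open}}\setminus L\cong S^{k-1}\times(0,1)$. Since $k\ge 3$ forces $k-1\ge 2$, the sphere $S^{k-1}$ is simply connected and hence so is the product, while $H_{k-1}(S^{k-1}\times(0,1))\cong\mathbb{Z}\ne 0$ shows it is not contractible; transporting back through $\Psi$ finishes the theorem.

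I expect the main obstacle to be the convexification step itself: verifying that $\tilde K$ is convex—that passing to reciprocals genuinely turns the determinant conditions of \eqref{defcn} into a single convex cone via the signature-$(n,1)$ quadratic forms of Lemma \ref{lemma: Cn contraibile}—and that $\Psi$ is a well-defined self-homeomorphism of the open simplex sending $\mathcal{M}_{ind}(\mathrm{At}(\A))$ exactly onto $L$. The accompanying properness claim, namely that $L$ is compactly contained in the open simplex (which is what makes the complement an honest annulus rather than something running into $\partial\Pi_k$), is the other delicate point, and it is precisely the $C_k$ constraint that supplies it.
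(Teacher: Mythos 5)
Your overall route is the same as the paper's: use the reciprocal homeomorphism $\Phi$ to convexify the determinant conditions into Lorentz--cone nappes (this is exactly the content of Lemma \ref{lemma: Cn contraibile}), normalize radially, and then invoke Lemmas \ref{lemma 1: intersezione in 1 punto}--\ref{lemma 3: corona} to recognize a ball and an annulus. But there is a genuine gap at precisely the step you flag as delicate: the properness claim is false. Your asymptotic ``$\det(\M)$ behaves like $-(k-2)(1/x_\alpha)^2$'' only covers the degeneration of a \emph{single} coordinate with the others bounded away from $0$; when several coordinates vanish at comparable rates the leading terms cancel and the constraints survive. Concretely, take $\vx_\epsilon=(1-k\epsilon,\epsilon,\dots,\epsilon)$. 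By Remark \ref{rem:omogeneo}, for every $3\leq n\leq k$ (and indeed for every choice of $n+1$ atoms) the quantity $\bigl(\sum_\alpha 1/x_\alpha\bigr)^2-(n-1)\sum_\alpha 1/x_\alpha^2$ evaluates to $n/\epsilon^2+O(1/\epsilon)>0$, so $\vx_\epsilon\in{\mathcal M}_{ind}(\mathrm{At}(\A))$ for all small $\epsilon>0$, yet $\vx_\epsilon\to(1,0,\dots,0)\in\partial\Pi_k$. Hence ${\mathcal M}_{ind}(\mathrm{At}(\A))$ does \emph{not} stay uniformly away from $\partial\Pi_k$, and your $L$ is not compact: in the $z$-picture it accumulates at $(0,\tfrac1k,\dots,\tfrac1k)$, which satisfies all the cone inequalities strictly. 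Consequently the hypotheses of Lemma \ref{lemma 2: omeomorfismo disco} (compact $K$) and of Lemma \ref{lemma 3: corona} (compact $K_1\subset\mathrm{Int}(K_2)$, here with $K_2=\overline{\Pi_k}$) fail for $K_1=L$, and part (2) of your argument breaks exactly where it leans on compactness; the claimed homeomorphisms $L\cong\overline{B^k}$ and $\Pi_k\setminus L\cong S^{k-1}\times(0,1)$ are unjustified as stated.

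Part (1), by contrast, survives without the false claim and can even be shortened: once $\Psi$ is a self-homeomorphism of the open simplex carrying ${\mathcal M}_{ind}(\mathrm{At}(\A))$ onto the convex set $L$ (your verification of this, via homogeneity of $\det(\M)$ and convexity of the positive nappes, is correct), contractibility is immediate because convex sets are contractible --- no disc homeomorphism, hence no compactness, is needed. This matches the paper, which gets (1) from the retraction $s$ of \eqref{eq: retrazione su Cn tilde} onto $\Pi_k$ together with contractibility of the cone $\bigcap_{n} C_n$. For part (2), the paper compactifies on the sphere rather than on the simplex: it sets $X=C\cap\overline{S_{+}^{k}}$, proves $X$ is geodesically convex in $S_+^k$ with $p=(1,\dots,1)$ (normalized) interior, and applies Lemma \ref{lemma 3: corona} only after stereographic projection from $-p$, with $K_2=\pi(\overline{S_+^k})$ as the outer body instead of the closed simplex. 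Be aware, however, that the boundary-accumulation phenomenon your argument missed is quietly present there too (the closure of $C$ meets directions such as $(1,0,\dots,0)$, where all the products in \eqref{fondlemma} vanish and $\det(\M)$ degenerates to $0$), so any repair of your version must genuinely address the behaviour of $L$ near $\partial\Pi_k$ rather than assert, as you do, that the $C_k$-constraint excludes it.
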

\begin{proof}
(1) Consider the  (open) retraction \begin{equation}\label{eq: retrazione su Cn tilde}
s:\mathbb{R}_+^{k+1}\rightarrow \Pi_k\subset (0, 1)^{k+1} , \ \vx=(x_0, \dots , x_k)\mapsto \frac{\vx}{s(\vx)},
\end{equation}
where $s(\vx):=\displaystyle\sum_{\alpha =0}^kx_\alpha$.
Then $${\mathcal M}_{ind}(\mathrm{At}(\A))=\displaystyle \displaystyle\bigcap_{n=3}^kC_n \cap \Pi_k=s(\displaystyle\bigcap_{n=3}^kC_n)$$
is contractible being a strong deformation retract of $\displaystyle\bigcap_{n=3}^k C_n$ which is contractible  (it is homeomorphic to   $\displaystyle\bigcap_{n=3}^k H_n\times \mathbb{R}_+^{k-n}$ by Lemma 
\ref{lemma: Cn contraibile}).

(2) Let $C =\displaystyle \bigcap_{n=3}^{k} C_n$. Then $\mathcal{M}(\mathrm{At}(\A))\setminus{\mathcal M}_{ind}(\mathrm{At}(\A))$ is a strong deformation retract of $\mathbb{R}_{+}^{k+1}\setminus C$ using  \eqref{eq: retrazione su Cn tilde}.
Let  $X= C\cap \overline{S_{+}^{k}}$. Then  $X$ is compact (the intersection of closed sets in the compact $\overline{S_{+}^{k}}$), $C\cong X\times (0,+\infty)$ 
(by Remark \ref{rem:omogeneo}) and $\mathbb{R}_{+}^{k+1}\setminus C = (S_{+}^{k}\setminus X)\times (0,+\infty)$. Moreover, $X$ has non-empty interior (it follows from Corollary \ref{cor: immersione atomi con stessa probabilita} and the proof of Lemma \ref{lemma: Cn contraibile} that, for instance, $p=(1,\dots, 1)\in Int(X)$) and it  is geodesically convex as a subset of $S_{+}^{k}$. To see this, consider $x_{1},x_{2}\in X\subset C$. Let $\overline{{x_{1}x_{2}}}\in C$ be the segment connecting $x_{1}$, $x_{2}$ ($C$ is convex) and $H\subset C$ the hypercone in $\mathbb{R}^{k+1}$ generated by $\overline{{x_{1}x_{2}}}$ (the inclusion 
$H\subset C$ follows by Remark \ref{rem:omogeneo}).
Then $H\cap S_{+}^{k}\subset X$ is the minimal geodesic segment in $S^k$, since it is the intersection of $S_{+}^{k}$ with an  hyperplane containing  $0$, $x_1$ and $x_2$. 
Thus, by Lemma \ref{lemma 1: intersezione in 1 punto}, the geodesic from $p=(1,\dots, 1)$ intersects $X$ exactly in one point. Consider the stereographic projection $\pi$ from the $S^{k}\setminus \{-p\}$ to the tangent space $T_pS^k$ of the sphere $S^{k}$ at the point $p$, namely the homeomorphism which to a point $x$ of  
 $S^{k}\setminus \{-p\}$ associates the intersection of the line joining $x$ to $-p$ with $T_pS^k$.
 Thus  $K_1\coloneqq\pi(X)$ and $K_2\coloneqq \pi (\overline{S_{+}^{k}})$ are subsets in $\mathbb{R}^{k}\cong T_{p}S^k$ satisfying the assumptions of Lemma \ref{lemma 3: corona}.
 It follows that 
 $$\Int K_2\setminus K_1\cong S_{+}^{k}\setminus X\cong S^{k-1}\times (0,1)$$ 
 and therefore $\mathbb{R}_{+}^{k+1}\setminus C = S_{+}^{k}\setminus X\times (0,+\infty)\cong S^{k-1}\times (0,1)\times (0,+\infty)$ is homotopically equivalent to $S^{k-1}$, thus simply connected ($k\geq 3$) and not contractible.
\end{proof}

\begin{remark}\label{rem: dimensione N}
Consider  the set of measure which can be induced in $\R^N$ with a fixed $N$. 
Then one can show that this set is open in $\Pi_k$ for $N\geq k$ since each $C_n$
with $n\leq k$ is open.
\end{remark}


\section{Conclusion and future work}

The space of \emph{strictly positive} probability measures $\mathcal{M}(\A)$ of a finite Boolean algebra $\A$ is an open convex sets (the boundary being given by all probability measures) in $[0,1]^{n}$, with $n =\; \mid A\mid$. 
The main contribution of the present work consists in ``splitting'' such convex space into the (disjoint) union of the contractible set ${\mathcal M}_{ind}(\mathrm{At}(\A))$, corresponding to those measures $m$ for which the space $(\mathrm{At}(\A), d_m )$ embeds isometrically in $\mathbb{R}^{N}$, and its complement. We have given examples of measures belonging to each of the two components: the measure corresponding to the principle of indifference (see Corollary \ref{cor: immersione atomi con stessa probabilita}) and the binomial distribution (see Example \ref{ex: misura che non si immerge}), respectively. As a remarkable consequence of our topological characterization, we get that $m\in \mathcal{M}_{ind}(\mathrm{At}(\A))$ if and only if $m$ is homeotopically equivalent to the measure of indifference; on the other hand, all measures $m$ for which $(\mathrm{At}(\A), d_m )$ can not be isometrically embedded in $\mathbb{R}^{N}$ are path connected (though not homeotopically equivalent) to the binomial distribution.

It is natural to wonder in which component of $\mathcal{M}(\mathrm{At}(\A))$ can be located well-known distributions finding applications in probability (e.g. the hypergeometrical distribution) and exploits the potential applications of the fact that measures belonging to the component $\mathcal{M}_{ind}(\mathrm{At}(\A))$ have the same homotopy. In order to tackle the former problem, refinements of the useful Lemma \ref{lemma: determinante} shall be found to ease calculations.

Our main result (Theorem \ref{th: principale}) relies on the topological characterization (see Lemma \ref{lemma: Cn contraibile}) of the objects $C_n=\{\vx\in \mathbb{R}_+^{k+1}\ |\ \det (\M)\geq 0 \} $ ($ 3\leq n\leq k$), for which the assumption about the finiteness of the Boolean algebra considered is crucial (the existence of atoms is only a byproduct). We leave to future work the extension of the present setting to infinite (atomic) Boolean algebras. 

Finally, we confined our attention to metric Boolean algebras. However, in the last decades the theory of probability has been extended to algebraic semantics of several non-classical logics, via the development of the so-called theory of states. We will dedicate future work to the study of the metric properties, for instance, of MV-algebras (a study initiated in \cite{RiecanMundici,metricMV}) equipped with a faithful state (see e.g. \cite{FlaminioKroupa,flaminioJSL,flaminio2019}) or involutive bisemilattices (see \cite{BONZIOstati}) and their isometric embeddability into Euclidean spaces.

\section{Appendix}\label{sec: Appendice}

This section is dedicated to the proof of Lemma \ref{lemma: determinante}, whose technicalities are not so important (we believe) for the reading of the whole message of the paper.

Recall that, given a (square) matrix $A$ of order $n$, the \emph{adjugate} $Adj(A)$ of $A$ is the transpose of the cofactor matrix of $A$. Equivalently, $Adj(A)$ is the matrix of order $n$ such that $A\cdot Adj(A) = Adj(A)\cdot A = \mathrm{det}(A)\cdot \mathbb{I}_{n}$, where $\mathbb{I}_{n}$ is the identity matrix of order $n$. We recall here a result from linear algebra (see e.g. \cite{harville2008matrix}) that we will use in the proof of Lemma \ref{lemma: determinante}.  

\begin{lemma}[Matrix determinant lemma]\label{lemma:Matrix determinant lemma}
Let $A$ be a matrix of order $n$ and $u,v$ column vectors in $\mathbb{R}^{n}$. Then 
\[
\det(A+uv^{t}) = \det(A) + v^{t}Adj(A)u.
\]
\end{lemma}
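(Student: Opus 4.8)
The plan is to prove the identity by exploiting the multilinearity of the determinant in the columns of its argument. This route has the double advantage of requiring no invertibility assumption on $A$ and of producing the adjugate directly, without passing through the inverse.

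First I would write $A = (a_1 \mid \cdots \mid a_n)$ in terms of its columns $a_j \in \mathbb{R}^n$, and observe that the $j$-th column of $A + uv^t$ is precisely $a_j + v_j u$, where $v_j$ denotes the $j$-th entry of $v$. Hence $\det(A + uv^t) = \det(a_1 + v_1 u, \ldots, a_n + v_n u)$. I would then expand this by multilinearity in the $n$ columns, obtaining $2^n$ terms, one for each choice, column by column, of either $a_j$ or the summand $v_j u$. The key observation is that every term in which $u$ is selected in two or more columns contains a repeated column and therefore vanishes. The only surviving terms are: the term in which $a_j$ is chosen in every column, contributing exactly $\det(A)$; and, for each $j$, the term in which $u$ replaces the $j$-th column while $a_i$ is kept for $i \neq j$, contributing $v_j \det(a_1, \ldots, a_{j-1}, u, a_{j+1}, \ldots, a_n)$.

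Next I would expand each of these single-$u$ determinants by Laplace expansion along the $j$-th column, writing $\det(a_1, \ldots, u, \ldots, a_n) = \sum_{i=1}^n u_i C_{ij}$, where $C_{ij}$ is the $(i,j)$-cofactor of $A$ (the relevant cofactors are genuinely those of $A$, since every column other than the $j$-th still agrees with the corresponding column of $A$). Summing over $j$ gives $\det(A + uv^t) = \det(A) + \sum_{i,j} u_i v_j C_{ij}$. The final step is to recognise the double sum as $v^t \operatorname{Adj}(A)\, u$: since by definition the adjugate is the transpose of the cofactor matrix, $\operatorname{Adj}(A)_{ji} = C_{ij}$, so that $v^t \operatorname{Adj}(A)\, u = \sum_{i,j} v_j\, \operatorname{Adj}(A)_{ji}\, u_i = \sum_{i,j} v_j C_{ij} u_i$, which is exactly the computed sum.

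The only delicate point — and the place where it is easiest to slip — is the index bookkeeping in this last identification: one must track the transpose built into the definition of $\operatorname{Adj}(A)$ so that the cofactor $C_{ij}$, sitting in position $(i,j)$ of the cofactor matrix, is correctly paired with $u_i$ (coming from the column expansion) and $v_j$ (coming from the multilinear expansion). An alternative, slightly less hands-on route would be to verify the formula first for invertible $A$ by computing the determinant of the bordered $(n+1) \times (n+1)$ matrix (with top-left block $A$, last column $u$, last row $-v^t$, and corner entry $1$) in two ways via the Schur complement, yielding $\det(A + uv^t) = \det(A)(1 + v^t A^{-1} u)$, and then extending to all $A$ by a polynomial-density argument together with $\operatorname{Adj}(A) = \det(A) A^{-1}$. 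I expect the multilinear argument to be cleaner, as it avoids both the invertibility case distinction and the density step.
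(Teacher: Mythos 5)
Your proof is correct, but there is nothing in the paper to compare it against: the paper states this lemma as a known fact from linear algebra, with a pointer to \cite{harville2008matrix}, and gives no proof of its own. Your multilinearity argument is the standard self-contained derivation, and the delicate points are all handled properly: the $j$-th column of $A+uv^{t}$ is indeed $a_{j}+v_{j}u$; every term of the $2^{n}$-fold expansion in which $u$ is selected in two or more column slots has a repeated column and vanishes; the cofactors appearing in the Laplace expansion of each single-$u$ determinant are genuinely those of $A$, because forming $C_{ij}$ deletes the $j$-th column, which is the only column where the modified matrix differs from $A$; and the transpose built into the adjugate is tracked correctly, since $v^{t}\operatorname{Adj}(A)u=\sum_{i,j}v_{j}\operatorname{Adj}(A)_{ji}u_{i}=\sum_{i,j}v_{j}C_{ij}u_{i}$ matches the sum you computed. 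A point in favour of your chosen route, worth stressing in this context: it needs no invertibility hypothesis on $A$, which is exactly the generality in which the paper uses the lemma in the Appendix (proof of Lemma \ref{lemma: determinante}), where it is applied both to $-\mathbb{I}_{n}$ and, implicitly through the adjugate computation, to the matrix $A$ with entries $-2(1-\delta_{ij})x_{i}x_{j}$, whose determinant could in principle vanish for $n=1$-type degeneracies; your alternative sketch via the bordered matrix and Schur complement would deliver the same identity only after the case distinction and polynomial-density step you mention, so the multilinear argument is indeed the cleaner choice.
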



\noindent
\textbf{Lemma 10.} \emph{Let $\A$ be a finite metric atomic Boolean algebra with $k+1$ atoms and $\M$, $2\leq n\leq k$ be the matrix defined above. Then 
$$ \det (\M)=2^{n-1}\left[\left(\sum_{\alpha=0}^n x_0\cdot\cdots \cdot\hat x_\alpha\cdot \cdots \cdot x_n\right)^2-(n-1)\left(\sum_{\alpha=0}^n x_0^2\cdot\cdots\cdot\hat x_\alpha^2\cdots\cdot x_n^2\right)\right], 
$$
where $\hat x_i$ means that $x_i$ has to be omitted.}

\begin{proof}


Preliminarily observe that $\M = A + vv^{t}$, where $A $ 
is the matrix whose generic entry is $A_{ij} = -2(1-\delta_{ij})x_{i}x_{j}$ and $v= (x_{0} + x_1, \dots,  x_0 + x_{n})\in\mathbb{R}^{n}$.

\vspace{5pt}
\noindent
\textbf{Claim 1.} $\det(A) = -2^{n}x_{1}^{2}\dots x_{n}^{2}(n-1)$. \\
\noindent
Observe that $\det(A) = (-2)^{n}x_{1}^{2}\dots x_{n}^{2}\det(B)$, where $B =  \begin{pmatrix}
0 & 1 & \cdots & 1 \\
1 & 0 & \cdots & 1 \\
\vdots & \vdots & \ddots & \vdots \\
1 & 1 & \cdots & 0 \\
\end{pmatrix} $, i.e. $B_{ij}=1-\delta_{ij}$. Moreover, $B = cc^{t}-\mathbb{I}_{n}$, for $c=(1,\dots,1)$. Thus, by Lemma \ref{lemma:Matrix determinant lemma}, $\det(B) = \det(-\mathbb{I}_{n}) + c\mathrm{Adj}(-\mathbb{I}_{n})c^{t}$. Observe that $\mathrm{Adj}(-\mathbb{I}_{n}) = (-1)^{n-1}\mathbb{I}_{n}$ (indeed $-\mathbb{I}_{n}(-1)^{n-1}\mathbb{I}_{n} = (-1)^{n} \mathbb{I}_{n} = \det(-\mathbb{I}_{n})\cdot\mathbb{I}_{n}) $, hence 
\begin{align*}
 \det(B) =&  \det(-\mathbb{I}_{n}) + c^{t}\mathrm{Adj}(-\mathbb{I}_{n})c &&& \text{(Lemma \ref{lemma:Matrix determinant lemma})}
\\  =& (-1)^{n} + c^{t}(-1)^{n-1}\mathbb{I}_{n}c 
\\ = & (-1)^{n} + (-1)^{n-1}c^{t}c
\\ = & (-1)^{n} + (-1)^{n-1}n
\\ = & (-1)^{n-1} (n-1).
\end{align*}
It then follows that $\det(A) = (-2)^{n}x_{1}^{2}\cdot\dots\cdot x_{n}^{2}\det(B) = (-2)^{n}x_{1}^{2}\cdot\dots\cdot x_{n}^{2}(-1)^{n-1} (n-1) = -2^{n}x_{1}^{2}\cdot\dots\cdot x_{n}^{2}(n-1)$, showing Claim 1.

\vspace{5pt}
\noindent
\textbf{Claim 2.} $\mathrm{Adj}(A) = D$, with generic entry  
\[
D_{ij} = 2^{n-1}x_{1}^{2}\dots x_{n}^{2}\left (\frac{1}{x_i x_j}-\frac{\delta_{ij}}{x_{i}^{2}} (n-1)\right).
\]
By definition of adjugate, $A\cdot D = D\cdot A = \det(A)\cdot\mathbb{I}_{n}$, equivalently $\displaystyle\sum_{k=1}^{n}A_{ik}D_{kj} = \det(A)\cdot\delta_{ij}$. 
 $\displaystyle\sum_{k=1}^{n}A_{ik}D_{kj} = \displaystyle\sum_{k=1}^{n}-2(1-\delta_{ik})x_{i}x_{k}\cdot\left(2^{n-1}x_{1}^{2}\dots x_{n}^{2}\left(\frac{1}{x_i x_j}-\frac{\delta_{kj}}{x_{k}^{2}} (n-1)\right)\right)  $ \\
$ = -2^{n}x_{1}^{2}\dots x_{n}^{2}\displaystyle\sum_{k=1}^{n}(1-\delta_{ik})\left( \frac{x_i}{x_j}-\frac{x_i \delta_{kj}}{x_{k}} (n-1)\right) $ \\
$ =  -2^{n}x_{1}^{2}\dots x_{n}^{2} \left( \frac{x_i}{x_j}\displaystyle\sum_{k=1}^{n}(1-\delta_{ik}) - x_i (n-1)\displaystyle\sum_{k=1}^{n}\frac{(1- \delta_{ik})\delta_{kj}}{x_{k}}\right) $ \\
$ =  -2^{n}x_{1}^{2}\dots x_{n}^{2} \left( \frac{x_i}{x_j}(n-1) - \frac{x_i}{x_j}(n-1) + x_{i}(n-1) \displaystyle\sum_{k=1}^{n}\frac{\delta_{ik}\delta_{kj}}{x_{k}}\right) $ \\
$ =   -2^{n}x_{1}^{2}\dots x_{n}^{2}(n-1)x_i \displaystyle\sum_{k=1}^{n}\frac{\delta_{ik}\delta_{kj}}{x_{k}}  =  \det(A)\cdot\delta_{ij}$,
which shows Claim 2. \\
\noindent
To simplify notation, let us fix $E_{ij} = \frac{1}{x_i x_j}-\frac{\delta_{ij}}{x_{i}^{2}} (n-1)$ (a part of the generic entry of $D$). 
In order to conclude, observe that, by Lemma \ref{lemma:Matrix determinant lemma}, 
$\det(\M) = \det(A) + vDv^{t} $ 
\\ 
$ = -2^{n}x_{1}^{2}\dots x_{n}^{2}(n-1) + 2^{n-1}x_{1}^{2}\dots x_{n}^{2}vEv^{t} $  \text{(Claims 1, 2)} \\
 $=  -2^{n-1}x_{1}^{2}\dots x_{n}^{2}\left(-2 (n-1) + vEv^{t}\right) $
\\ 
$ = -2^{n-1}x_{1}^{2}\dots x_{n}^{2}\left(-2 (n-1) + \displaystyle\sum_{i,j=1}^{n}(x_0 + x_{i})E_{ij}(x_{0} + x_j )\right) $
\\ 
$ =  -2^{n-1}x_{1}^{2}\dots x_{n}^{2}\left(-2 (n-1) + \displaystyle\sum_{i,j=1}^{n}(x_0 + x_{i})\left(\frac{1}{x_i x_j}-\frac{\delta_{ij}}{x_{i}^{2}} (n-1)\right)(x_{0} + x_j )\right) $
\\ $ =  -2^{n-1}x_{1}^{2}\dots x_{n}^{2} \left(-2 (n-1) + \displaystyle\sum_{i,j=1}^{n}\left (\frac{x_0}{x_{i}} + 1\right)\left(\frac{x_0}{x_j} + 1\right) - \displaystyle\sum_{i,j=1}^{n}\left(\frac{x_0}{x_{i}} + 1\right)(x_{0}+x_j )\frac{\delta_{ij}(n-1)}{x_i}\right) $
\\ 
$ =  -2^{n-1}x_{1}^{2}\dots x_{n}^{2} \left(-2 (n-1) + \left[\displaystyle\sum_{i=1}^{n}\left (\frac{x_0}{x_{i}} + 1\right)\right ]^{2} -(n-1) \displaystyle\sum_{i=1}^{n}\left(\frac{x_0}{x_{i}} + 1\right)^{2}\right) $
\\ 
$ =  -2^{n-1}x_{1}^{2}\dots x_{n}^{2} \left(-2 (n-1) + \left[n-1 +1+\displaystyle\sum_{i=1}^{n}\frac{x_0}{x_{i}}\right ]^{2} -(n-1) \left(\displaystyle\sum_{i=1}^{n}\frac{x_{0}^{2}}{x_{i}^{2}} +2\sum_{i=1}^{n}\frac{x_{0}}{x_{i}} + n+ 1 - 1\right)\right) $
\\ $ =  -2^{n-1}x_{1}^{2}\dots x_{n}^{2}(-2 (n-1) +\left(1+\displaystyle\sum_{i=1}^{n}\frac{x_0}{x_{i}}\right)^{2} + (n-1)^{2} +2(n-1)\displaystyle\sum_{i=1}^{n}\frac{x_0}{x_{i}} +2(n-1) - $
\\ 
$ -(n-1) \displaystyle\sum_{i=1}^{n}\frac{x_{0}^{2}}{x_{i}^{2}} -2(n-1)\displaystyle\sum_{i=1}^{n}\frac{x_{0}}{x_{i}} - (n-1)^{2} - (n - 1)) $
\\ 
$ =  -2^{n-1}x_{1}^{2}\dots x_{n}^{2}\left[ \left( 1 + \displaystyle\sum_{i=1}^{n}\frac{x_0}{x_i}\right)^{2} - (n-1)\left( 1 + \displaystyle\sum_{i=1}^{n} \frac{x_{0}^{2}}{x_{i}^{2}}\right)\right] $
\\ 
$ =  -2^{n-1}\left[ \left( x_1\dots x_n + \displaystyle\sum_{i=1}^{n}\frac{x_{0}x_{1}\dots x_{n}}{x_i}\right)^{2} - (n-1)\left(x_1^{2}\dots x_n^{2} + \displaystyle\sum_{i=1}^{n}\frac{x_{0}^{2}x_{1}^{2}\dots x_{n}^{2}}{x_i^{2}} \right)\right] $
\\ 
$ =  2^{n-1}\left[\left(\displaystyle\sum_{\alpha=0}^n x_0\cdot\cdots \cdot\hat x_j\cdot \cdots \cdot x_n\right)^2-(n-1)\left(\displaystyle\sum_{\alpha=0}^n x_0^2\cdot\cdots\cdot\hat x_j^2\cdots\cdot x_n^2\right)\right].$
\end{proof}

\section*{Acknowledgments}
The authors wish to thank Monica Musio and the members of the Department of Mathematics and Computer Science of the University of Cagliari for the useful comments received during the presentation of the contents of the paper. They also acknowledge the support of the INdAM, GNSAGA - Gruppo Nazionale per le Strutture Algebriche, Geometriche e le loro Applicazioni. 


\end{document}